\numberwithin{equation}{section}
\newtheorem{theorem}{\bf Theorem}[section]
\newtheorem{definition}{Definition}[section]
\newtheorem{corollary}{Corollary}[section]
\newtheorem{lemma}{Lemma}[section]
\newtheorem{remark}{Remark}[section]
\theoremstyle{remark}
\newtheorem{exam}{\bf Example}[section]
\def \vec{\mathrm v\mathrm e \mathrm c}
\def \R{{\mathbb R}}
\def \F{{\mathsf{F}}}
\def \C{{\mathbb{C}}}
\def \I{{\mathfrak{I}}}
\def \x{{\bm x}}
\def \T{\mathsf T}
\def\bmatrix#1{\left[\begin{matrix}
		#1
	\end{matrix}\right]}
\def \diag{\mathrm{diag}}
\def \D{{\Delta}}
\def \r{{\bf r}}
\def  \0{{0}}
\def \u{\bm u}
\def \p{\bm p}
\def \T{\mathsf T}
\def\bmatrix#1{\left[ \begin{matrix} #1 \end{matrix} \right]}
\def \RR{\mathfrak{R}}
\def \R{{\mathbb R}}
\title{Structured Backward Errors of Sparse Generalized Saddle Point Problems with Hermitian Block Matrices}
\author{Sk. Safique Ahmad\footnotemark[2] \footnotemark[1] \and Pinki Khatun \footnotemark[2] }
\date{}
\begin{document}
\maketitle	
\begin{abstract}
    In this paper, we derive the structured backward error (BE) for a class of generalized saddle point problems (GSPP) by preserving the sparsity pattern and Hermitian structures of the block matrices. Additionally, we construct the optimal backward perturbation matrices for which the structured BE is achieved. Our analysis also examines the structured BE in cases where the sparsity pattern is not maintained. Through numerical experiments, we demonstrate the reliability of the derived structured BEs and the corresponding optimal backward perturbations.  Additionally, the derived structured BEs are used to assess the strong backward stability of numerical methods for solving the GSPP.
\end{abstract}
	
		\noindent {\bf Keywords.} 
			Hermitian matrices, Backward error, Perturbation analysis, Saddle point problems, Sparsity
   
\noindent {\bf AMS subject classification.} 15A12, 65F20, 65F35,  65F99 
		
		\footnotetext[1]{Corresponding author.}
	\footnotetext[2]{
		Department of Mathematics, Indian Institute of Technology Indore, Simrol, Indore 453552, Madhya Pradesh, India. \texttt{Email}: \texttt{safique@iiti.ac.in} (Sk. Safique Ahmad),   \texttt{pinki996.pk@gmail.com} (Pinki Khatun)}

\section{Introduction}
The concept of backward error (BE) analysis was proposed by \citet{Wilkinson1965},  plays a crucial role in the field of numerical linear algebra. It has several key applications: for example, it can be used to determine a nearly perturbed problem with minimal norm perturbation, ensuring that the approximate/computed solution of the original problem aligns with the exact solution of the perturbed problem; by taking the product of the condition numbers and the BE, an upper bound on the forward error can be established; BEs are often employed as a stopping criterion for iterative algorithms when solving a problem.  
For a given problem, if the computed BE of an approximate solution is within the unit round-off error, then the corresponding numerical algorithm is considered backward stable \cite{higham2002}. The notion of structured BE is introduced when the problem processes some special structure, and the BE is analyzed with structure-preserving constraints imposed on the perturbation matrices.  Furthermore, a numerical algorithm is classified as strongly backward stable if the computed structured BE remains within the unit round-off error \cite{strongweak, strongstab}.

This paper considers the $2\times 2$  block  system of linear equations of the following  form:
\begin{align}\label{eq11}
    \mathfrak{B}{\bm x}\triangleq \bmatrix{E & F^{*} \\ H &G}\bmatrix{\bm{u} \\ \bm{p}}=\bmatrix{q\\r}\triangleq {\bm f},
\end{align}
where $E\in \C^{n\times n}, F,H\in \C^{m\times n}, G\in \C^{m\times m},$ $q\in \C^n$ and $r\in \C^m.$ Hereafter, $B^{*}$ represents for conjugate transpose of $B.$ 
The \(2 \times 2\) block linear system encompasses several important cases: the Hermitian saddle point problem (SPP) (\(E = E^*\), \(F = H\), \(G = 0\)), the non-Hermitian SPP (\(F = H\), \(G = G^*\)), and the real standard SPP (\(E \in \R^{n \times n},\, F, H \in \R^{m \times n},\, G \in \R^{m \times m},\, q \in \R^n,\) and \(r \in \R^m\)) \cite{ZZBai2021, Benzi2005}. Here, $0$ denotes the zero matrix of appropriate size. We refer \eqref{eq11} as the generalized SPP (GSPP). The GSPP of the form\eqref{eq11}  has broad applications across various scientific and engineering domains, including computational fluid dynamics \cite{navier2015, Elman2005}, optimal control \cite{OPTIMAL}, weighted and equality constrained least-squares estimation \cite{LSproblem2002}, and so on. For fundamental properties, as well as a comprehensive survey and applications of GSPP, refer to \cite{Benzi2005}.

The development of various iterative methods for solving the GSPP \eqref{eq11} has become a focal point for many researchers, as reflected in recent studies \cite{ Uzawa-DOS2024, CSPP2009,  CSPP2022, CSPP2021} and reference therein.  However, the computed solution may still contain some errors and can potentially lead to insignificant results. Therefore, it is crucial to assess how closely the computed solution approximates the solution to the original problem. 

Recently, many studies have been carried out on structured BE analysis and condition numbers for real SPPs \cite{ PinkiGSPP, LAA_Pinki, be2012LAA, be2017ma, be2022lma, Sun1999, be2007wei,  BE2020BING}. {The existing literature primarily considers the block matrices $E, F, G,$ and $ H$ real, and $(\cdot)^*$ in \eqref{eq11} is  considered as real transpose (denoted by $(\cdot)^{\T}$). A brief overview of literature work is as follows:  by considering $E=E^{\T}\in \R^{n\times n}, F=H\in \R^{m\times n}$ and $G=0,$ \citet{Sun1999} derived the structured BE for the GSPP. By employing Sun's methods, \citet{BEKKT2004} investigated the  BE for the GSPP when $E=I_n\in \R^{n\times n}, F=H\in \R^{m\times n}$ and $G=0\in \R^{m\times m}$ and  \citet{be2007wei} study the structured BE for the case $E\neq E^{\T}\in \R^{n\times n}, F=H\in \R^{m\times n}$ and $G=0\in \R^{m\times m}.$ Here, $I_n$ denotes the $n\times n$  identity matrix.
 Further,  when $G\neq 0$, structured BEs for the GSPP have been studied under the following matrix structures: 
 \begin{enumerate}
     \item[(a)] $E=E^{\T}\in \R^{n\times n}, F=H\in \R^{m\times n}$  and $G=G^{\T} \in \R^{m\times m}$ in \cite{be2012LAA, be2022lma, BE2020BING};
     \item[(b)] $E=E^{\T}\in \R^{n\times n},$ $F= H\in \R^{m\times n}$  and $G\in \R^{m\times m}$ in \cite{BE2020BING};
     \item[(c)] $E\in \R^{n\times n},$ $F= H\in \R^{m\times n}$  and $G\in \R^{m\times m}$ in \cite{be2012LAA, be2017ma};
     \item[(d)] $E=E^{\T}\in \R^{n\times n},$ $F\neq H\in \R^{m\times n}$  and $G\in \R^{m\times m}$ in \cite{be2012LAA, be2022lma};
     \item[(e)] $E= E^{\T}\in \R^{n\times n},$ $F\neq H\in \R^{m\times n}$  and $G=G^{\T}\in \R^{m\times m}$ in \cite{BE2020BING};
     \item[(f)] $E\in \R^{n\times n},$ $F\neq H\in \R^{m\times n}$  and $G=G^{\T}\in \R^{m\times m}$ in \cite{be2012LAA}.
 \end{enumerate}
Recent studies have also focused on structured BE for SPP with three-by-three block structures \cite{threeBE2023, BE2024}. These investigations consider cases where the block matrices are real, with diagonal blocks being either symmetric or nonsymmetric. 
Nevertheless, the existing literature reveals several shortcomings: $(1)$ they do not explore structured BE analysis when the block matrices in \eqref{eq11} are complex, specifically when block matrices possess Hermitian structure; 
$(2)$ they do not provide the optimal backward perturbations needed to achieve the structured BE.} 

In many practical applications, such as the discretization of the Stokes equation \cite{Elman2005} and PDE-constrained optimization problems,  the coefficient matrix of the GSPP includes large numbers of zeros, i.e., prosesses sparsity structure. Preserving this sparsity is crucial for computational efficiency and maintaining the problem's structure. Recent works on structured BE analysis for eigenvalue problems \cite{prince2020, prince2021, PEVP2012} have highlighted the importance of incorporating sparsity preservation in perturbation analysis. Therefore, performing perturbation analysis that preserves the sparsity pattern is essential, which requires the construction of optimal sparse perturbation matrices to ensure accuracy and efficiency in solving the GSPP. The existing literature on structured BE for GSPP also does not preserve the sparsity of the coefficient matrix.

To overcome these drawbacks, in this paper, by preserving the sparsity pattern of $\mathfrak{B},$ we investigate the structured BEs in the following cases:
\begin{align}
 & (i)~  E\in \mathbb{HC}^{n\times n}, ~F=H\in \C^{m\times n},~ G\in \C^{m\times m},\\
 & (ii)~ E\in \mathbb{C}^{n\times n},~ F=H\in \C^{m\times n},~ G\in \mathbb{HC}^{m\times m},\\
 & (iii)~ E\in \mathbb{HC}^{n\times n},~ F\neq H\in \C^{m\times n},~ G\in \mathbb{HC}^{m\times m},
\end{align}
where $\mathbb{HC}^{n\times n}$ represents the collection of all Hermitian matrices.

The main highlights of this paper are listed as follows:
\begin{itemize}
    \item We investigate the structured BE for the GSPP \eqref{eq11} for  cases $(i)$-$(iii)$ by retaining the the sparsity pattern of $\mathfrak{B}.$
    \item We provide compact formulae for the structured BE in each of the three cases and derive the formulae for the optimal backward perturbations. Moreover, we derive the structured BEs for the GSPP when the sparsity of $\mathfrak{B}$ is not considered.
    \item We present numerical examples to validate the derived results and evaluate the strong backward stability of numerical methods for solving the GSPP \eqref{eq11}.
\end{itemize}

The remainder of the paper is organized as follows: Section \ref{Natation} presents essential basic notations, preliminaries, and a few important results. In Section \ref{sec:SBE}, we derive the structured BEs for cases (\textit{i}), (\textit{ii}), and (\textit{iii}). Section \ref{sec:numerical} provides numerical examples to validate the obtained formulae for structured BEs, and concluding remarks are presented in Section \ref{sec:conclusion}.

\section{Notation and Preliminaries}\label{Natation}
\subsection{Notation}
Throughout the paper, let $\mathbb{R}^{m \times n}$ and $\C^{m\times n}$ denote the set of all  $m \times n$ real and complex matrices, respectively. We use $\mathbb{SR}^{n\times n},$  $\mathbb{SKR}^{n\times n},$ and $\mathbb{HC}^{n\times n}$ represent the set of all $n \times n$ real symmetric matrices, real skew-symmetric matrices, and  Hermitian matrices, respectively. For $X\in \C^{n\times m},$ $\mathfrak{R}(X)$, $\mathfrak{I}(X),$ $X^{\T},$ $X^{*},$ and $X^{\dagger}$ represent the real part, imaginary part, transpose, conjugate transpose, and Moore-Penrose inverse of $X,$ respectively. We use $0_{m\times n}$ and $I_m$ to denote the $m\times n$  zero matrix and identity matrix, respectively (we only use 0 when the size is clear). We use $e_i^m$ to denote $i^{th}$ column of the identity matrix $I_m$. The symbol ${\bf 1}_{m\times m}$ represents $m\times m$ matrix with all entries equal to $1$. The notations \( \|\cdot\|_2 \) and \( \|\cdot\|_{\mathsf{F}} \) represent the Euclidean norm and the Frobenius norm, respectively. The componentwise multiplication of the matrices of $X, Z\in \R^{m\times n}$ is defined as $X\odot Z=[x_{ij}z_{ij}]\in \R^{m\times n}.$ For  $Z\in \R^{m\times n}$, we define $\Theta_X:={\tt sgn}(Z)=[{\tt sgn}(z_{ij})]\in \R^{m\times n},$ where
 \begin{align*}
     {\tt sgn}(z_{ij})=\left\{ \begin{array}{lcl}
     1, &\text{for}~z_{ij}\neq 0,\\
     0,  &\text{for}~ z_{ij}=0.
      \end{array}\right.
 \end{align*}
 For  $X=[\x_1, \x_2,\ldots, \x_n]\in \R^{m\times n},$ where $\x_i\in \R^{m},$ $i=1,2,\ldots, n,$  we define $\vec(X):=[\x_1^{\T}, \x_2^{\T},\ldots, \x_n^{\T}]^{\T}\in\R^{mn}$. For the matrices $X\in \R^{m\times n}$ and $Y\in \R^{p\times q},$  the Kronecker product \cite{kronecker1981} is defined as $X\otimes Y= [x_{ij}Y]\in \R^{mp\times nq}.$ For the matrix $A\in\R^{m\times n}$  and vectors $u\in \R^{n}$ and $v\in \R^{m},$  the following properties of the Kronecker product and the vec operator hold \cite{kronecker1981, kronecker2004}:
 \begin{align}\label{kron}
\left\{ \begin{array}{c}
Au =  (u^{\bm\top}\otimes I_m)\vec(A), \\
  A^{\T}v= (I_n\otimes v^{\T})\vec(A).\\
 \end{array}\right.
  \end{align}
Given  positive weight vectors ${\sigma}_1=[\alpha_1,\alpha_2,\alpha_3, \beta_1, \beta_2]^{\T}$ and ${\sigma}_2=[\alpha_1,\alpha_2,\alpha_3,\alpha_4, \beta_1, \beta_2]^{\T},$ then we define the weighted Frobenius norms are defined as follows:
\begin{align*}
&\bm{\zeta}^{{\sigma}_1}(E,F,G,q,r)=\sqrt{\alpha_1^2\|E\|^2_{\F}+\alpha_2^2\|F\|^2_{\F}+\alpha_3^2\|G\|^2_{\F}+\beta_1^2\|q\|^2_{2}+\beta_2^2\|r\|^2_{2}},\\
&\bm{\zeta}^{{\sigma}_2}(E,F, H,G,q,r)=\sqrt{\alpha_1^2\|E\|^2_{\F}+\alpha_2^2\|F\|^2_{\F}+\alpha_3^2\|H\|^2_{\F}+\alpha_4^2\|G\|^2_{\F}+\beta_1^2\|q\|^2_{2}+\beta_2^2\|r\|^2_{2}}.
    \end{align*}

\subsection{Preliminaries}
In this subsection, we first present the notion of unstructured and structured BE and a few important definitions and lemmas.
\begin{definition}\label{def:UBE}\cite{Rigal1967}
     Let $\widehat{{\bm x}}=[\widehat{\u}^{\T},\widehat{\p}^{\T}]^{\T}$ be an approximate solution of the GSPP \eqref{eq11}.Then, the normwise unstructured BE, denoted by ${\bm \xi}(\tilde{{\bm x}}),$ is defined as: 
 \begin{eqnarray}
 \nonumber
{\bm \xi}({\widehat{{\bm x}}})&:=&\min_{(\Delta \mathfrak{B},~\Delta {\bm f})\in \mathcal{F}}\left\{\left\|\bmatrix{\frac{\|\Delta \mathfrak{B}\|_{\F}}{\|\mathfrak{B}\|_{\F}} & \frac{\|\Delta {\bm f}\|_{2}}{\|{\bm f}\|_{2}}}\right\|_2\right\}, 
\end{eqnarray}
where \begin{eqnarray}
    \mathcal{F}=\left\{(\Delta \mathfrak{B},~\Delta {\bm f}) {\big |}( \mathfrak{B}+\Delta \mathfrak{B})\widehat{{\bm x}}={\bm f}+\Delta {\bm f}\right\}.
\end{eqnarray}
\end{definition}
\citet{Rigal1967} provided explicit expression for the BE defined in \eqref{def:UBE} which is given by
\begin{equation}\label{UBE:exp}
    {\bm \xi}({\widehat{{\bm x}}})=\frac{\|{\bm f}-\mathfrak{B} \widehat{{\bm x}}\|_2}{\sqrt{\|\mathfrak{B}\|^2_{\mathsf{F}}\|\widehat{{\bm x}}\|_2^2+\|{\bm f}\|_2^2}}.
\end{equation}
 When ${\bm \xi}(\widehat{\bm x})$ is sufficiently small, the approximate solution $\widehat{\bm x}$ becomes the exact solution to a slightly perturbed system $(\mathfrak{B} + \Delta \mathfrak{B})\widehat{\bm x } = {\bm f} + \Delta {\bm f}$, where both $\|\Delta \mathfrak{B}\|_F$ and $\|\Delta {\bm f}\|_2$ are relatively small. This implies that the corresponding numerical algorithm exhibits backward stability.

In the following definition, we introduce the concept of structured BE for the GSPP \eqref{eq11}. Throughout the paper, we assume that the coefficient matrix $\mathfrak{B}$ in \eqref{eq11} is nonsingular.
\begin{definition}\label{def:SBE}
   Assume that $\widehat{\bm{x}}=[\widehat{\bm{u}}^{\T},\widehat{\bm{p}}^{\T}]^{\T}$ is a computed solution of the GSPP \eqref{eq11}. Then, we define the  normwise structured BEs $\bm{\xi}^{\mathcal{G}_i}(\widehat{\bm{u}},\widehat{\bm{p}}),$ $i=1,2,3,$ as follows:
 \begin{align*}
&\bm{\xi}^{\mathcal{G}_i}(\widehat{\bm{u}},\widehat{\bm{p}})=\displaystyle{\min_{\left(\begin{array}{c}
       \D E,\D F,\\
     \D G, \D q, \D r 
     \end{array}\right)\in\, \mathcal{G}_i}} {\bm{\zeta}^{{\sigma}_1}(\D E,\D F,\D G, \D q, \D r)},
     \quad \mbox{for}\, \, i=1,2,\\
&\bm{\xi}^{\mathcal{G}_3}(\widehat{\bm{u}},\widehat{\bm{p}})=\displaystyle{\min_{\left(\begin{array}{c}
       \D E,\D F, \D H,\\
     \D G, \D q, \D r 
     \end{array}\right)\in\, \mathcal{G}_3}} {\bm{\zeta}^{{\sigma}_2}(\D E,\D F, \D H, \D G, \D q, \D r)},
     \end{align*}

 where 
 \begin{align}\label{s3:eq24}
     &\nonumber\mathcal{G}_{1}=\Bigg\{\left(\begin{array}{c}
       \D E,\D F,\\
     \D G, \D q, \D r 
     \end{array}\right) {\bigg |} \bmatrix{E+\D E & (F+\D F)^{*}\\ F+\D F & G+\D G}\bmatrix{\widehat{\bm{u}}\\ \widehat{\bm{p}}}=\bmatrix{q+\D q\\ r+\D r},\\
     &\hspace{1.5cm}\D E \in\mathbb{HC}^{n\times n},\D F\in \C^{m\times n}, \D G\in \C^{m\times m},\, \D q\in \C^{n},\D r\in \C^{m}\Bigg\},\\ \nonumber
     & \mathcal{G}_2=\Bigg\{\left(\begin{array}{c}
       \D E,\D F,\\
     \D G, \D q, \D r 
     \end{array}\right) {\bigg |} \bmatrix{E+\D E & (F+\D F)^{*}\\ F+\D F & G+\D G}\bmatrix{\widehat{\bm{u}}\\ \widehat{\bm{p}}}=\bmatrix{q+\D q\\ r+\D r},\\ \label{s3:eq25}
     &\hspace{1.5cm}\D E \in \C^{n\times n},\D F\in \C^{m\times n}, \D G\in \mathbb{HC}^{m\times m},\, \D q\in \C^{n},\D r\in \C^{m}\Bigg\},
     \\ \nonumber
     & \mathcal{G}_3=\Bigg\{\left(\begin{array}{c}
       \D E,\D F, \D H,\\
     \D G, \D q, \D r 
     \end{array}\right) {\bigg |} \bmatrix{E+\D E & (F+\D F)^{*}\\ H+\D H & G+\D G}\bmatrix{\widehat{\bm{u}}\\ \widehat{\bm{p}}}=\bmatrix{q+\D q\\ r+\D r},\\ \label{s3:eq26}
     &\hspace{1.2cm}\D E \in  \mathbb{HC}^{n\times n},\D F, \D H\in \C^{m\times n}, \D G\in \mathbb{HC}^{m\times m},\, \D q\in \C^{n},\D r\in \C^{m}\Bigg\}.
  \end{align}
\end{definition}       

By choosing $\alpha_1=\frac{1}{\|E\|_{\mathsf{F}}},$ $\alpha_2=\frac{1}{\|F\|_{\mathsf{F}}},$ $\alpha_3=\frac{1}{\|G\|_{\mathsf{F}}},$ (or $\alpha_3=\frac{1}{\|H\|_{\mathsf{F}}},$ $\alpha_4=\frac{1}{\|G\|_{\mathsf{F}}}$ for $\bm{\xi}^{\mathcal{G}_3}(\widehat{\u}, \widehat{p})$), $\beta_1=\frac{1}{\|q\|_{2}},$ and  $\beta_2=\frac{1}{\|r\|_{2}},$ we obtain relative structured BEs for the GSPP \eqref{eq11}.

\begin{remark}
   We denote the optimal backward perturbations for the structured BEs by $\D E_{\tt{opt}},$ $\D F_{\tt{opt}},$ $\D E_{\tt{opt}},$  $\D G_{\tt{opt}},$  $\D q_{\tt{opt}},$ $\D r_{\tt{opt}}.$ Therefore, the following holds:
    \begin{align*}
        &\bm{\xi}^{\mathcal{G}_i}(\widehat{\bm{u}},\widehat{\bm{p}})={\bm{\zeta}^{{\sigma}_1}(\D E_{\tt{opt}},\D F_{\tt{opt}},\D G_{\tt{opt}}, \D q_{\tt{opt}}, \D r_{\tt{opt}})}~~\text{for}~~i =1,2,\\
       & \bm{\xi}^{\mathcal{G}_3}(\widehat{\bm{u}},\widehat{\bm{p}})={\bm{\zeta}^{{\sigma}_2}(\D E_{\tt{opt}},\D F_{\tt{opt}},\D H_{\tt{opt}},\D G_{\tt{opt}}, \D q_{\tt{opt}}, \D r_{\tt{opt}})}.
    \end{align*}
    
\end{remark}
\begin{remark}
Our focus is on examining the structured BE while preserving the sparsity pattern of the block matrices. To accomplish this, we substitute the perturbation matrices $\D E$, $\D F$, $\D H$, and $\D G$ with $\D E \odot \Theta_E$, $\D F \odot \Theta_F$, $\D H \odot \Theta_H$, and $\D G \odot \Theta_G$, respectively. In this framework, the structured BEs are denoted as ${\bm\xi}_{\bf sps}^{\mathcal{G}_i}(\u, \p)$ for $i = 1, 2, 3$. Additionally, we represent the optimal perturbation matrices by ${\D E}^{\tt sps}_{\tt opt}$, ${\D F}^{\tt sps}_{\tt opt}, {\D H}^{\tt sps}_{\tt opt}$, ${\D G}^{\tt sps}_{\tt opt},$ ${\D q}_{\tt opt}$ and ${\D r}_{\tt opt}.$
\end{remark}

Next, we discuss some important definitions and lemmas.

\begin{lemma}\label{sec2:lemma}\cite{DSWATKINS}
 Consider the system of linear equations $Ax = b,$ where $A\in \R^{n\times m}, b\in \R^n$ is consistent if and only if $AA^\dagger  b = b$. Furthermore, when the system is consistent, the solution with the minimum norm is given by $A^\dagger b$.
\end{lemma}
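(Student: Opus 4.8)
The plan is to prove both assertions directly from the four defining Moore--Penrose identities, which for a real matrix read $AA^\dagger A=A$, $A^\dagger AA^\dagger=A^\dagger$, $(AA^\dagger)^\T=AA^\dagger$, and $(A^\dagger A)^\T=A^\dagger A$, supplemented by the Pythagorean identity for orthogonal vectors in $\R^{m}$. No deep machinery is needed; the whole argument is a sequence of substitutions into these identities, so the role of the proof is mainly to identify \emph{which} identity does the work at each step.

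For the consistency equivalence I would argue both directions separately. First, if $AA^\dagger b=b$, then putting $x=A^\dagger b$ gives $Ax=AA^\dagger b=b$, so the system has a solution and is consistent. Conversely, if $Ax_0=b$ for some $x_0$, then left-multiplying by $AA^\dagger$ and invoking $AA^\dagger A=A$ yields $AA^\dagger b=AA^\dagger A x_0=Ax_0=b$. This establishes the first claim using only the generalized-inverse identity $AA^\dagger A=A$.

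For the minimum-norm statement I would assume consistency and set $x_\star:=A^\dagger b$, which is already known to be a solution by the first part. For an arbitrary solution $x$, the difference satisfies $A(x-x_\star)=b-b=0$, so $x-x_\star\in\mathcal{N}(A)$. The crucial step is to show that $x_\star$ is orthogonal to the whole null space: from $Ax_\star=b$ one gets $A^\dagger A x_\star=A^\dagger b=x_\star$, and because $A^\dagger A$ is symmetric (by $(A^\dagger A)^\T=A^\dagger A$) and idempotent (by $A^\dagger A A^\dagger A=A^\dagger(AA^\dagger A)=A^\dagger A$), it is the orthogonal projector onto $\mathcal{N}(A)^\perp$; hence $x_\star\in\mathcal{N}(A)^\perp$. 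Equivalently, for any $z\in\mathcal{N}(A)$ one has $\langle x_\star,z\rangle=\langle A^\dagger A x_\star,z\rangle=\langle x_\star,A^\dagger A z\rangle=\langle x_\star,A^\dagger(Az)\rangle=0$. The Pythagorean identity then gives $\|x\|_2^2=\|x_\star\|_2^2+\|x-x_\star\|_2^2\ge\|x_\star\|_2^2$, with equality precisely when $x=x_\star$, so $A^\dagger b$ is the unique solution of minimal norm.

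The only genuinely delicate point, and the one I expect to be the main (albeit minor) obstacle, is verifying that $x_\star$ lies in $\mathcal{N}(A)^\perp$: this is exactly where the \emph{symmetry} of the Moore--Penrose conditions $(A^\dagger A)^\T=A^\dagger A$ is indispensable, rather than merely the weaker generalized-inverse property $AA^\dagger A=A$ used for consistency. Once orthogonality is in hand, the norm comparison is immediate.
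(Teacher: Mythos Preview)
Your argument is correct and complete: both the consistency criterion and the minimum-norm characterization follow cleanly from the Moore--Penrose identities as you outline, and the orthogonality step via the symmetry of $A^\dagger A$ is handled properly.

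Note, however, that the paper does not actually prove this lemma; it is quoted from an external reference (Watkins, \emph{Fundamentals of Matrix Computations}) and stated without proof. So there is no ``paper's own proof'' to compare against. Your proof is the standard textbook derivation and would serve perfectly well as a self-contained justification.
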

\begin{definition}
    Let  $Z\in \mathbb{SR}^{m\times m},$ then we define its generator vector by $$\vec_{S}(Z):=[{\bm z}^{\T}_1,{\bm z}^{\T}_2,\ldots, {\bm z}^{\T}_m]^{\T}\in \R^{\frac{m(m+1)}{2}},$$ 
    where 
    ${\bm z}_1=[z_{11},z_{21},\ldots, z_{m1}]^{\T}\in \R^m,$ ${\bm z}_2=[z_{22},z_{32},\ldots, z_{m2}]^{\T}\in \R^{m-1},  \ldots,  {\bm z}_{m-1}=[z_{(m-1)(m-1)},$ $ z_{m(m-1)}]^{\T}\in \R^2,$ $ {\bm z}_m=[z_{mm}]\in\R.$
\end{definition}
\begin{definition}
    Let  $Z\in \mathbb{SKR}^{m\times m},$ then we define its generator vector by $$\vec_{SK}(Z):=[{\bm z}^{\T}_1,{\bm z}^{\T}_2,\ldots, {\bm z}^{\T}_{m-1}]^{\T}\in \R^{\frac{m(m-1)}{2}},$$ 
    where ${\bm z}_1=[z_{21},\ldots, z_{m1}]^{\T}\in \R^{m-1},$ ${\bm z}_2=[z_{32},\ldots, z_{m2}]^{\T}\in \R^{m-2},$ $ \ldots,$ $ {\bm z}_{m-1}=[z_{(m-1)(m-1)}]^{\T}\in \R.$
\end{definition}
 The following two lemmas can be also found in \cite{Herm2016}.
\begin{lemma}\label{lemma1}
    Let $M\in \mathbb{SR}^{m\times m}.$ Then $\vec(M) = \mathcal{J}^m_{S}\vec_{S}(M),$
    where $$\mathcal{J}^m_{S}=\bmatrix{\mathcal{J}^{(1)}_{S}& \mathcal{J}^{(2)}_{S} & \cdots &\mathcal{J}^{(m)}_{S}}\in \R^{m^2\times \frac{m(m+1)}{2}},$$  $\mathcal{J}^{(i)}_S\in \R^{m\times (m-i+1)}$  are defined by
  {\footnotesize  \begin{align*}
\mathcal{J}^{(1)}_{S}=\bmatrix{e_1^{m}& e_2^{m}&e_3^{m} & \cdots&e_{m-1}^{m}& e_m^{m}\\
    0 &e_1^{m} &0& \cdots &\cdots&0\\
    0&0&e_1^{m}&\cdots&\cdots&0\\
    \vdots& \vdots &\vdots &\ddots& &\vdots\\ 0&0&0 &\cdots&  e_1^{m}&0\\
    0&0&0 &\cdots& 0& e_1^{m}
    },\, 
    \mathcal{J}^{(2)}_{S}=\bmatrix{0 & 0&\cdots&\cdots &0\\
    e_2^{m}&e_3^{m}& \cdots & \cdots& e_m^{m}\\
    0 &e_2^{m} &0& \cdots &0\\
    0&0&e_2^{m}&\cdots&0\\
    \vdots& \vdots &\ddots &\ddots& \vdots\\
    0&\cdots &\cdots& 0& e_2^{m}
    },\,
   \ldots,~ \mathcal{J}^{(m)}_{S}=\bmatrix{0\\ 0\\ 0\\\vdots\\ 0\\e_m^{m}},
    \end{align*}
    }
    and $e^m_i$ denotes the $i^{th}$ column of the identity matrix $I_m.$
\end{lemma}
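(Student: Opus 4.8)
The statement to be proved is Lemma~\ref{lemma1}, which asserts that for a real symmetric matrix $M \in \mathbb{SR}^{m\times m}$ we have $\vec(M) = \mathcal{J}^m_{S}\vec_{S}(M)$, with $\mathcal{J}^m_S$ built from the blocks $\mathcal{J}^{(i)}_S$ displayed in the statement. The plan is to verify this by a direct columnwise bookkeeping argument: both sides are vectors in $\R^{m^2}$, so it suffices to check that the $((j-1)m + k)$-th entry of each side coincides for all $1 \le j,k \le m$. The left-hand side $\vec(M)$ has, in position $(j-1)m+k$, simply the entry $m_{kj}$. So the whole content of the lemma is that the linear map $\mathcal{J}^m_S$ correctly ``unpacks'' the generator vector $\vec_S(M) = [\,{\bm z}_1^{\T}, \dots, {\bm z}_m^{\T}\,]^{\T}$, whose $i$-th block ${\bm z}_i = [m_{ii}, m_{(i+1)i}, \dots, m_{mi}]^{\T}$ stores the on-and-below-diagonal entries of column $i$, into all $m^2$ entries of $M$, using symmetry $m_{kj} = m_{jk}$ to fill the strictly-upper-triangular positions.

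First I would fix notation for the block structure: $\mathcal{J}^{(i)}_S \in \R^{m \times (m-i+1)}$ multiplies the sub-vector ${\bm z}_i \in \R^{m-i+1}$, and $\mathcal{J}^m_S \vec_S(M) = \sum_{i=1}^m \mathcal{J}^{(i)}_S {\bm z}_i$. I would then read off the columns of $\mathcal{J}^{(i)}_S$ from the displayed pattern: its $\ell$-th column (for $\ell = 1, \dots, m-i+1$, so that the corresponding entry of ${\bm z}_i$ is $m_{(i+\ell-1)\,i}$) is the standard basis vector $e^m_{i+\ell-1}$ sitting in ``row-block'' $\max(i, i+\ell-1) = i+\ell-1$ — more precisely, from the display one checks that in $\mathcal{J}^{(i)}_S$ the basis vector $e^m_{i}$ appears in every column, placed in the row that makes the product land in the right spot, and the vectors $e^m_{i+1}, \dots, e^m_m$ appear once each in the first row-block. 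Concretely: $\mathcal{J}^{(i)}_S {\bm z}_i$ is the $R^{m^2}$ vector whose $j$-th length-$m$ block equals $m_{ij}\, e^m_i$ for $j > i$ (off-diagonal contributions placed by symmetry into the upper part), equals $\sum_{k \ge i} m_{ki}\, e^m_k = [0,\dots,0,m_{ii},m_{(i+1)i},\dots,m_{mi}]^{\T}$ for $j = i$ (the diagonal block, filled with the lower part of column $i$), and is zero for $j < i$. I would then sum over $i$: the $j$-th block of $\mathcal{J}^m_S \vec_S(M)$ receives, from the $i=j$ term, the contribution supplying entries $m_{kj}$ for $k \ge j$, and from each term with $i < j$ the single entry $m_{ij} = m_{ji}$ placed in row $i$; together these give exactly the column $[m_{1j}, m_{2j}, \dots, m_{mj}]^{\T}$, which is the $j$-th column of $M$ and hence the $j$-th block of $\vec(M)$. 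This completes the verification.

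The main obstacle is purely notational rather than conceptual: one must pin down precisely, from the somewhat schematic matrix display, which standard basis vector occupies which position in each $\mathcal{J}^{(i)}_S$, and check the edge cases $i=1$ (where $\mathcal{J}^{(1)}_S$ has a full first row of $e^m_1, e^m_2, \dots, e^m_m$ and copies of $e^m_1$ down the subsequent blocks) and $i=m$ (where $\mathcal{J}^{(m)}_S = [0,\dots,0,(e^m_m)^{\T}]^{\T}$ is a single column). A clean way to avoid ambiguity is to simply define $\mathcal{J}^{(i)}_S$ entrywise — its $(r, \ell)$ entry is $1$ iff ($r = i+\ell-1$ and $\ell \ge 2$ corresponds to the ``first block'' placement) — but the cleanest presentation is the block-column description above, then invoke symmetry $M = M^{\T}$ to identify $m_{ij}$ with $m_{ji}$ in the off-diagonal bookkeeping. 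No deep tools are needed; the result is an exact identity of the ``duplication-matrix'' type, and the proof is a finite check that the indexing matches. I would present it in two or three short paragraphs: set up $\vec$ and $\vec_S$, describe the action of each $\mathcal{J}^{(i)}_S$ on ${\bm z}_i$ block by block, and conclude by summing.
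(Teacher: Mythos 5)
Your proof is correct. The paper itself gives no proof of this lemma---it simply cites \cite{Herm2016}---so there is no in-paper argument to compare against; your direct block-by-block verification that $\sum_{i=1}^{m}\mathcal{J}^{(i)}_S{\bm z}_i$ reproduces each column of $M$, using symmetry $m_{ij}=m_{ji}$ to fill the strictly upper-triangular positions, is the standard duplication-matrix argument and is complete. One small imprecision worth fixing in a write-up: for $\ell\ge 2$ the $\ell$-th column of $\mathcal{J}^{(i)}_S$ contains \emph{two} nonzero blocks ($e^m_{i+\ell-1}$ in row-block $i$ and $e^m_i$ in row-block $i+\ell-1$), not a single basis vector as your first description suggests; your subsequent formula for the $j$-th length-$m$ block of $\mathcal{J}^{(i)}_S{\bm z}_i$ (namely $m_{ij}e^m_i$ for $j>i$, $\sum_{k\ge i}m_{ki}e^m_k$ for $j=i$, and $0$ for $j<i$) is exactly right, so the summation over $i$ and hence the whole argument goes through.
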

\begin{lemma}\label{lemma2}
    Let $M\in \mathbb{SKR}^{m\times m}.$ Then $\vec(M) = \mathcal{J}^m_{SK}\vec_{SK}(M),$
    where $$\mathcal{J}^m_{SK}=\bmatrix{\mathcal{J}^{(1)}_{SK}& \mathcal{J}^{(2)}_{SK} & \cdots \mathcal{J}^{(m-1)}_{SK}}\in \R^{m^2\times \frac{m(m-1)}{2}},$$  $\mathcal{J}^{(i)}_{SK}\in \R^{m\times (m-i)}$  are defined by  
     {\footnotesize  \begin{align*}
\mathcal{J}^{(1)}_{SK}=\bmatrix{e_2^{m}& e_3^{m} & \cdots&e_{m-1}^{m}& e_m^{m}\\
    -e_1^{m} &0& \cdots &\cdots&0\\
    0&-e_1^{m}&\cdots&\cdots&0\\
     \vdots &\vdots &\ddots& &\vdots\\
     0&0 &\cdots&  -e_1^{m}&0\\
    0&0 &\cdots& 0& -e_1^{m}
    }, 
    \mathcal{J}^{(2)}_{SK}=\bmatrix{ 0&\cdots&\cdots &0\\
    e_3^{m}& \cdots & \cdots& e_m^{m}\\
   -e_2^{m} &0& \cdots &0\\
    0&-e_2^{m}&\cdots&0\\
     \vdots &\ddots &\ddots& \vdots\\
    \cdots &\cdots& 0& -e_2^{m}
    },
   \ldots,~ \mathcal{J}^{(m-1)}_{SK}=\bmatrix{0\\ 0\\ 0\\\vdots\\ e_m^{m}\\-e_{m-1}^{m}},
    \end{align*}
    }
   and $e^m_i$ denotes the $i^{th}$ column of the identity matrix $I_m.$ 
\end{lemma}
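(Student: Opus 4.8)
The plan is to prove the identity $\vec(M)=\mathcal J^m_{SK}\vec_{SK}(M)$ by a direct block‑row computation, using only the two defining relations $m_{ij}=-m_{ji}$ and $m_{ii}=0$ of a skew‑symmetric matrix. The starting point is that $\vec(M)$ stacks the columns of $M$: partitioning $\vec(M)\in\R^{m^2}$ into $m$ consecutive blocks of length $m$, the $j$‑th block is precisely the $j$‑th column $[m_{1j},m_{2j},\ldots,m_{mj}]^{\T}$ of $M$. I would partition $\mathcal J^m_{SK}$ conformally into its block‑columns $\mathcal J^{(1)}_{SK},\ldots,\mathcal J^{(m-1)}_{SK}$, where $\mathcal J^{(i)}_{SK}$ has $m-i$ scalar columns, matching the length of the generator subvector ${\bm z}_i=[m_{(i+1)i},m_{(i+2)i},\ldots,m_{mi}]^{\T}$ that it multiplies; I would also split each block‑column into its $m$ block‑rows of height $m$, writing $R^{(i)}_j$ for the $j$‑th block‑row of $\mathcal J^{(i)}_{SK}$.

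Next I would read off, from the displayed form of $\mathcal J^{(i)}_{SK}$, that $R^{(i)}_j=0$ for $j<i$; that $R^{(i)}_i=[\,e_{i+1}^m\ \ e_{i+2}^m\ \cdots\ e_m^m\,]$; and that for $t=1,\ldots,m-i$ one has $R^{(i)}_{i+t}=-e_i^m$ placed in the $t$‑th scalar column and zero elsewhere. Combined with skew‑symmetry, this is just the statement that each strictly‑lower‑triangular entry $m_{ki}$ (with $k>i$) plays exactly two roles in $M$ and no others: it reappears unchanged as entry $(k,i)$ of column $i$, and it reappears as $-m_{ki}=m_{ik}$ in position $i$ of column $k$.

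The verification then goes as follows: the $j$‑th block of $\mathcal J^m_{SK}\vec_{SK}(M)$ equals $\sum_{i=1}^{m-1}R^{(i)}_j\,{\bm z}_i$, and only the indices $i\le j$ contribute. The term $i=j$ gives $R^{(j)}_j{\bm z}_j=\sum_{k=j+1}^{m}e_k^m\,m_{kj}$, i.e. the on‑diagonal and below‑diagonal part of column $j$ (its diagonal entry being $m_{jj}=0$). Each term $i<j$ contributes through the block‑row $R^{(i)}_{i+t}$ with $i+t=j$, hence $t=j-i$, yielding $-e_i^m\,({\bm z}_i)_{j-i}=-e_i^m\,m_{ji}=e_i^m\,m_{ij}$, which deposits $m_{ij}=M_{ij}$ into position $i$; as $i$ runs over $1,\ldots,j-1$ this rebuilds the above‑diagonal part of column $j$. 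Summing, the $j$‑th block is exactly $[m_{1j},\ldots,m_{mj}]^{\T}$ for every $j$, which is the claim. The argument is pure bookkeeping; the only point needing care — the main obstacle — is keeping the two index shifts consistent (the position of a block‑row inside $\mathcal J^{(i)}_{SK}$ versus the index of the matching entry inside ${\bm z}_i$) together with the sign flip from skew‑symmetry. It is convenient to first check a small case such as $m=3$ or $m=4$, and to note that the scheme is the skew‑symmetric analogue of Lemma~\ref{lemma1}, with the diagonal block removed and signs inserted, consistent with the treatment in \cite{Herm2016}.
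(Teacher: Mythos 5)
Your block-row verification is correct: the decomposition of $\vec(M)$ into the $m$ columns of $M$, the identification of the block-rows $R^{(i)}_j$ of $\mathcal J^{(i)}_{SK}$, and the two index shifts (the $t$-th scalar column of $\mathcal J^{(i)}_{SK}$ sits in block-row $i+t$ and multiplies $({\bm z}_i)_{t}=m_{(i+t)i}$, so the $i<j$ terms deposit $-m_{ji}=m_{ij}$ into position $i$ of block $j$) are all handled consistently, and the vanishing diagonal $m_{jj}=0$ accounts for the one entry of column $j$ not produced by either case. The paper itself gives no proof of this lemma — it only cites \cite{Herm2016} — so there is nothing to compare against; your argument supplies the omitted elementary verification and is complete as written (the suggested small-$m$ check is a sanity check, not a needed step).
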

Next, we present an important lemma that is crucial for computing structured BEs while preserving the sparsity pattern.
\begin{lemma}\label{lemma2:SEC2}
Assume $M\in \R^{m\times m}$ and $X\in \mathbb{HC}^{n\times n}.$ Then the following holds:
    \begin{enumerate}
        \item When $M\in \mathbb{SR}^{m\times m}.$ Then, we have
        \begin{equation}\label{eq24}
        \vec(M\odot \Theta_{X})=  \mathcal{J}^m_{S}\Phi_X\vec_{S}(M\odot \Theta_{X}),
        \end{equation}
        where $\Phi_X=\diag(\vec_{S}(\Theta_X)).$
       \item When $M\in \mathbb{SKR}^{m\times m}.$ Then, we have  
       \begin{equation}\label{eq25} 
       \vec(M\odot \Theta_{X})=  \mathcal{J}^m_{SK}\Psi_X\vec_{SK}(M \odot \Theta_{X}),
       \end{equation}
    \end{enumerate}
    
    where 
$\Psi_X=\diag([\Theta_X(1,2:m), \Theta_X(2, 3:m),\ldots, \Theta_X( m-1:m)]^{\T}).$
\end{lemma}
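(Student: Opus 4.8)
The plan is to reduce both identities to Lemmas \ref{lemma1} and \ref{lemma2} by observing that the Hadamard product $M \odot \Theta_X$ is again symmetric (resp. skew-symmetric) when $M$ is, since $X$ Hermitian forces $\Theta_X$ to be a symmetric $0/1$ matrix (the zero pattern of a Hermitian matrix is symmetric because $x_{ij}=0 \iff \overline{x_{ji}}=0$). Hence for part $(1)$, $M \odot \Theta_X \in \mathbb{SR}^{m\times m}$, and applying Lemma \ref{lemma1} directly gives $\vec(M \odot \Theta_X) = \mathcal{J}^m_S \vec_S(M \odot \Theta_X)$. The content of \eqref{eq24} is therefore the extra claim that $\vec_S(M \odot \Theta_X) = \Phi_X \vec_S(M \odot \Theta_X)$, i.e. that $\Phi_X = \diag(\vec_S(\Theta_X))$ acts as the identity on the generator vector of $M \odot \Theta_X$. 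This is because $\Phi_X$ is a diagonal $0/1$ matrix whose $1$-entries index exactly the positions where $\Theta_X$ is nonzero, and those are exactly the positions where $M \odot \Theta_X$ can be nonzero; on the zero positions both sides vanish. So the key step is the entrywise verification that $(\vec_S(\Theta_X) \odot \vec_S(M \odot \Theta_X))_k = (\vec_S(M \odot \Theta_X))_k$ for every index $k$, which follows from ${\tt sgn}(z)z = z$ applied componentwise together with the fact that $\Theta_X(i,j) \cdot (M \odot \Theta_X)(i,j) = \Theta_X(i,j)^2 m_{ij} = \Theta_X(i,j) m_{ij} = (M\odot\Theta_X)(i,j)$.

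First I would state and prove the symmetry/skew-symmetry of $\Theta_X$: for $X \in \mathbb{HC}^{n\times n}$ we have $x_{ji} = \overline{x_{ij}}$, so $x_{ij} = 0$ iff $x_{ji} = 0$, giving $\Theta_X = \Theta_X^{\T}$. Then, for (1), note $(M \odot \Theta_X)^{\T} = M^{\T} \odot \Theta_X^{\T} = M \odot \Theta_X$ so the Hadamard product lies in $\mathbb{SR}^{m\times m}$ and Lemma \ref{lemma1} applies. Next I would unwind the definition of $\vec_S$ and $\Phi_X$: the $k$-th component of $\vec_S(\Theta_X)$ is $\Theta_X(i,j)$ for the unique lower-triangular index pair $(i,j)$ corresponding to position $k$, and the $k$-th component of $\vec_S(M \odot \Theta_X)$ is $\Theta_X(i,j) m_{ij}$. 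Multiplying, $\Phi_X \vec_S(M \odot \Theta_X)$ has $k$-th component $\Theta_X(i,j)^2 m_{ij} = \Theta_X(i,j) m_{ij}$, which equals the $k$-th component of $\vec_S(M \odot \Theta_X)$. Hence $\Phi_X \vec_S(M \odot \Theta_X) = \vec_S(M \odot \Theta_X)$, and substituting into Lemma \ref{lemma1} yields \eqref{eq24}.

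For part (2) the argument is parallel: $(M \odot \Theta_X)^{\T} = M^{\T} \odot \Theta_X = -M \odot \Theta_X$, so $M \odot \Theta_X \in \mathbb{SKR}^{m\times m}$ and Lemma \ref{lemma2} gives $\vec(M \odot \Theta_X) = \mathcal{J}^m_{SK} \vec_{SK}(M \odot \Theta_X)$. Then I would check that $\Psi_X$, whose diagonal lists the strictly-upper-triangular entries $\Theta_X(i,j)$ in the order matching $\vec_{SK}$, satisfies $\Psi_X \vec_{SK}(M \odot \Theta_X) = \vec_{SK}(M \odot \Theta_X)$ by the same $\Theta_X(i,j)^2 m_{ij} = \Theta_X(i,j) m_{ij}$ identity, and substitute to obtain \eqref{eq25}.

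The only mild obstacle is bookkeeping: one must confirm that the ordering of entries in $\vec_S(\Theta_X)$ (hence in $\Phi_X$) exactly matches the ordering in $\vec_S(M \odot \Theta_X)$, and likewise for $\vec_{SK}$ and $\Psi_X$ — but this is immediate since $\vec_S$ and $\vec_{SK}$ are applied with the same column-by-column convention to matrices with the same index set, so no reindexing is needed. Everything else is the elementary identity ${\tt sgn}(z)\,z = z$ (equivalently $\Theta_X \odot (M \odot \Theta_X) = M \odot \Theta_X$) promoted to the generator-vector level, combined with the already-established Lemmas \ref{lemma1} and \ref{lemma2}.
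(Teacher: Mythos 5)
Your proposal is correct and follows essentially the same route as the paper: establish that $\Theta_X$ inherits symmetry from the Hermitian structure of $X$ so that $M\odot\Theta_X$ remains symmetric (resp.\ skew-symmetric), invoke Lemmas \ref{lemma1} and \ref{lemma2}, and then use the idempotence identity $\Theta_X(i,j)^2 m_{ij}=\Theta_X(i,j)m_{ij}$ to insert $\Phi_X$ (resp.\ $\Psi_X$) harmlessly. Your write-up is in fact more detailed than the paper's, which leaves the $\Phi_X$-acts-as-identity step implicit.
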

\begin{proof}
   Let $M\in \mathbb{SR}^{m\times m}$ and $X\in \mathbb{HC}^{n\times n}.$ By definition of the matrix $\Theta_{X}$, we have $\Theta_{X}\in \mathbb{SR}^{m\times m}$ and consequently, $M\odot \Theta_{X}\in \mathbb{SR}^{m\times m}$. Then 
\begin{align*}
    \vec(M\odot \Theta_{X})&=\mathcal{J}^m_{S}\vec_{S}(M\odot \Theta_{X})\\
    &=\mathcal{J}^m_{S}\Phi_{X}\vec_{S}(M\odot \Theta_{X}).
\end{align*}
Hence, \eqref{eq24} follows.   Similarly, we can prove \eqref{eq25} when $M\in \mathbb{SKR}^{m\times m}.$
\end{proof} 
\begin{remark}
    By consider $X={\bf 1}_{m\times m},$ Lemma \ref{lemma2:SEC2} reduces to those given in Lemmas \ref{lemma1} and \ref{lemma2}, where ${\bf 1}_{m\times m}$ denotes $m\times m$ matrix with all entries equal to $1$. 
\end{remark}
\begin{remark}
    When $M, X\in \C^{m\times n},$ we have  $\vec(M\odot \Theta_X)=\Sigma_X\vec(M\odot \Theta_X ),$ where
    $\Sigma_X=\diag(\vec(\Theta_X)).$
\end{remark}
To illustrate Lemma \ref{lemma2:SEC2}, we consider the following example.
\begin{exam}
    Let $M\in \mathbb{SR}^{3\times 3}$ and $X\in \mathbb{HC}^{3\times 3}$ be given by
    \begin{equation*}
        M=\bmatrix{1 & 2& 3\\ 2& 4 &5 \\ 3& 5 & 6}~\text{and}~  X=\bmatrix{7 & 8+i& 0\\ 8-i& 9 &10+2i \\ 0 & 10-2i & 0}.
    \end{equation*}
    Then,
        $M\odot \Theta_X=\bmatrix{1 & 2& 0\\ 2& 4 &5 \\ 0& 5 & 0}$ and we have
    \begin{align}\vec(M\odot \Theta_X)&=\bmatrix{e_1^3& e_2^3& e_3^3& 0& 0&0\\ 0&e_1^3& 0&e_2^3&e_3^3& 0\\ 0&0 &e_1^3&0 &e_2^3&e_3^3}\bmatrix{1\\2\\ 0\\4\\5\\ 0}=\mathcal{J}^3_{S}\vec_{S}(M\odot \Theta_X)\\
    &=\mathcal{J}^3_{S}\bmatrix{1 &0& 0&0 &0&0\\ 0&1&0& 0&0 &0\\ 0&0&0& 0&0 &0\\ 0&0&0& 1&0 &0\\ 0&0&0& 0&1 &0\\ 0&0&0& 0&0 &0} \bmatrix{1\\2\\ 0\\4\\5\\ 0}\\
    &=\mathcal{J}_{S}^3\Phi_X\vec_{S}(M\odot \Theta_X).
    \end{align}
\end{exam}

\section{Computation of Structured BEs}\label{sec:SBE}
In this section, we derive the closed-form expressions for  $\bm{\xi}^{\mathcal{G}_i}_{\tt sps}(\widehat{\bm{u}},\widehat{\bm{p}})$ for $i=1,2,3,$ by preserving the sparsity of the coefficient matrix and the Hermitian structure of 
 the matrix $E$ and $G,$ respectively.  Moreover, we provide the optimal perturbations for which the structured BE is attained. The following lemma plays a crucial role in the computation of the structured BE.
\begin{lemma}\label{lemm1:sec3}
 Consider the following  $2\times 2$ block linear system:
  \begin{align}\label{eq:31}
      \bmatrix{E+\D E & (F+\D F)^{*}\\ H+\D H & G+\D G}\bmatrix{\bm{u}\\ \bm{p}}=\bmatrix{q+\D q\\ r+\D r}.
  \end{align} 
  Then\eqref{eq:31} can  be reformulated as the following systems of linear equations:
  \begin{align}
  \left\{\begin{array}{c}
  \mathfrak{R}(\D E)\RR(\u)-\I(\D E)\mathfrak{I}(\u)+ \mathfrak{R}(\D F)^{\T}\RR(\p)+ \I(\D F)^{\T}\mathfrak{I}(\p) - \RR(\D q)=\RR(Q),\\
   \RR(\D E)\I(\u)+\I(\D E)\RR(\u)+\RR(\D F)^{\T}\I(\p)-\I(\D F)^{\T}\RR(\p)-\I(\D q)=\I(Q),
  \end{array}
  \right.
  \end{align}
  and 
  \begin{align}
  \left\{\begin{array}{c}
  \mathfrak{R}(\D H)\RR(\u)-\I(\D H)\mathfrak{I}(\u)+ \mathfrak{R}(\D G)\RR(\p)- \I(\D G)\mathfrak{I}(\p) - \RR(\D r)=\RR(R),\\
   \RR(\D H)\I(\u)+\I(\D H)\RR(\u)+\RR(\D G)\I(\p)+\I(\D G)\RR(\p)-\I(\D r)=\I(R),
  \end{array}
  \right.
  \end{align}
   where \begin{align}
   Q=q-E\u-F^*\p~~\text{and}~~ R=r-H\u-G\p.
  \end{align}
\end{lemma}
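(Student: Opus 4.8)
The plan is to expand the complex block system \eqref{eq:31} into its real and imaginary parts, treating every complex matrix and vector as a pair of real objects. Writing $\D E = \RR(\D E) + i\,\I(\D E)$, $\u = \RR(\u) + i\,\I(\u)$, and so on, and using the identity $(F+\D F)^{*} = (F+\D F)^{\T}$ componentwise realified as $\RR(F+\D F)^{\T} - i\,\I(F+\D F)^{\T}$ (conjugate transpose negates the imaginary part), I would substitute these into the two block rows of \eqref{eq:31}. The first block row reads $(E+\D E)\u + (F+\D F)^{*}\p = q + \D q$; subtracting the exact-residual-free part, i.e. moving the known quantities $E\u + F^{*}\p - q$ to the right and collecting it into $-Q$, isolates the perturbation terms on the left. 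The second block row $(H+\D H)\u + (G+\D G)\p = r + \D r$ is handled identically, producing $R$ on the right.

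The key computational step is then to separate each of these two complex equations into real and imaginary parts. For a product $(\D E)\u = (\RR(\D E)+i\,\I(\D E))(\RR(\u)+i\,\I(\u))$, the real part is $\RR(\D E)\RR(\u) - \I(\D E)\I(\u)$ and the imaginary part is $\RR(\D E)\I(\u) + \I(\D E)\RR(\u)$; for $(\D F)^{*}\p = (\RR(\D F)^{\T} - i\,\I(\D F)^{\T})(\RR(\p)+i\,\I(\p))$, the real part is $\RR(\D F)^{\T}\RR(\p) + \I(\D F)^{\T}\I(\p)$ and the imaginary part is $\RR(\D F)^{\T}\I(\p) - \I(\D F)^{\T}\RR(\p)$. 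Matching these against the stated four equations — and doing the analogous bookkeeping for the second block row, where $\D G$ enters without a transpose/conjugate so the signs on the $\I(\D G)$ terms differ from the $\I(\D F)^{\T}$ ones — confirms the claimed reformulation. I would present the first block row in detail and note that the second is obtained by the same substitution with $E,F,q$ replaced by $H,G,r$ and with $(\cdot)^{*}$ absent from the $G$-block.

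There is essentially no obstacle here: the lemma is a purely algebraic restatement, and the only thing to be careful about is the sign convention coming from conjugate transposition of the $(F+\D F)$ block, which flips the sign of the imaginary part and hence produces the $+\I(\D F)^{\T}\RR(\p)$ versus $-\I(\D F)^{\T}\RR(\p)$ pattern, in contrast to the $\D G$ block which has no such flip. I would write out the realification of $(F+\D F)^{*}$ explicitly as a displayed line, substitute, expand the four real bilinear products, group terms, and read off the four scalar (vector) equations, thereby completing the proof.
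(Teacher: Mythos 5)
Your proposal is correct and follows exactly the paper's route: rewrite each block row with the unperturbed terms moved to the right-hand side to form $Q$ and $R$, then separate real and imaginary parts, with the sign pattern on the $\I(\D F)^{\T}$ terms coming from the conjugate transpose. The paper's proof is just a terser version of the same argument.
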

\begin{proof}
   The \(2 \times 2\) block linear system \eqref{eq:31} can be equivalently expressed as:
    %
    \begin{align}\label{eq1:32}
        &\D E \u+\D F^{*} \p-\D q=q -E\u -F^*\p,\\ \label{eq1:33}
        &\D H\u+\D G \p-\D r = r-H \u-G\p.
    \end{align}
    The proof follows by separating the real and imaginary parts from \eqref{eq1:32} and \eqref{eq1:33}.
\end{proof}
\subsection{Computation of the structured BE for case (\textit{i})}
In this subsection, we derive an explicit expression for the structured BE $\bm{\xi}^{\mathcal{G}_1}_{\tt sps} (\widehat{\bm{u}},\widehat{\bm{p}})$ by preserving the structure of $E\in \mathbb{HC}^{n\times n}$, ${F\in \C^{m\times n}}$, and $G\in \mathbb{C}^{m\times m}$ and maintaining the sparsity pattern of the coefficient matrix $\mathfrak{B}$ within the perturbation matrices.

Prior to stating the main theorem of this subsection, we  construct the following matrices:
Let $\mathfrak{D}_{S,n}\in \R^{\frac{n(n+1)}{2}\times \frac{n(n+1)}{2}}$ and $\mathfrak{D}_{SK,n}\in \R^{\frac{n(n-1)}{2}\times \frac{n(n-1)}{2}} $ be the diagonal matrices with
 $$\mathfrak{D}_{S,n}(j,j):=\left\{ \begin{array}{lcl}
		1, & \mbox{for} & j=\frac{(2n-(i-2))(i-1)}{2}+1,\,\, i=1,2,\ldots,n, \\ \sqrt{2},&  &\text{otherwise},
	\end{array}\right.$$
and $$\mathfrak{D}_{SK,n}(j,j):=\sqrt{2},~~ \text{for}~ j=1,2,\ldots, \frac{n(n-1)}{2}.$$
 Further, set
\begin{equation}
N_1:=\mathcal{J}^n_{S}\Phi_{E}\mathfrak{D}^{-1}_{S,n}\in \R^{n^2\times \frac{n(n+1)}{2}} ~~\text{and}~~N_2:=\mathcal{J}^n_{SK}\Psi_{E}\mathfrak{D}^{-1}_{SK,n}\in \R^{n^2\times \frac{n(n-1)}{2}}.
\end{equation}
Let $\bm{s}=n^2+2mn+2m^2$,  $\mathbf{X}_1\in \R^{2n\times \bm{s}}$ and $\mathbf{X}_2\in \R^{2m\times \bm{s}}$ be defined as follows:
 $$\mathbf{X}_1:= [\widehat{\mathbf{X}}_1~~ {0}_{2n\times 2m^2}]~~\text{and}~~ \mathbf{X}_2:= [ {0}_{2m\times n^2}~~ \widehat{\mathbf{X}}_2],$$
 where
{\footnotesize\begin{align*}
    \widehat{\mathbf{X}}_1= \bmatrix{\alpha_1^{-1}(\mathfrak{R}(\widehat{\u})^{\T}\otimes I_n)N_1 &-\alpha_1^{-1}(\mathfrak{I}(\widehat{\u})^{\T}\otimes I_n)N_2& \alpha_2^{-1} (I_n\otimes\mathfrak{R}(\widehat{\p})^{\T})\Sigma_F & \alpha_2^{-1}(I_n\otimes\mathfrak{I}(\widehat{\p})^{\T})\Sigma_F\\
     \alpha_1^{-1} (\mathfrak{I}(\widehat{\u})^{\T}\otimes I_n)N_1 &  \alpha_1^{-1}(\mathfrak{R}(\widehat{\u})^{\T}\otimes I_n)N_2&  \alpha_2^{-1}(I_n\otimes\mathfrak{I}(\widehat{\p})^{\T})\Sigma_F& -  \alpha_2^{-1}(I_n\otimes\mathfrak{R}(\widehat{\p})^{\T})\Sigma_F } 
  \end{align*}}
  and
  {\footnotesize\begin{align*}
    \widehat{\mathbf{X}}_2= \bmatrix{ \alpha_2^{-1}(\mathfrak{R}(\widehat{\u})^{\T}\otimes I_m)\Sigma_F &-\alpha_2^{-1}(\mathfrak{I}(\widehat{\u})^{\T}\otimes I_m)\Sigma_F& \alpha_3^{-1}(\mathfrak{R}(\widehat{\p})^{\T}\otimes I_m)\Sigma_G & -\alpha_3^{-1}(\mathfrak{I}(\widehat{\p})^{\T}\otimes I_m)\Sigma_G\\
      \alpha_2^{-1}(\mathfrak{I}(\widehat{\u})^{\T}\otimes I_m)\Sigma_F &\alpha_2^{-1} (\mathfrak{R}(\widehat{\u})^{\T}\otimes I_m)\Sigma_F&  \alpha_3^{-1}(\mathfrak{I}(\widehat{\p})^{\T} \otimes I_m)\Sigma_G& \alpha_3^{-1} (\mathfrak{R}(\widehat{\p})^{\T}\otimes I_m)\Sigma_G}.
  \end{align*}
  }
%
Set \begin{align}\label{eq:312}
   \D \mathcal{Y}:= \bmatrix{\alpha_1\mathfrak{D}_{S, n}\vec_{S}(\mathfrak{R}(\D E\odot
  \Theta_E))\\ \alpha_1\mathfrak{D}_{SK,n}\vec_{SK}(\mathfrak{I}(\D E\odot
  \Theta_E))\\ \alpha_2\vec(\mathfrak{R}(\D F\odot
  \Theta_F))\\ \alpha_2\vec(\mathfrak{I}(\D F\odot
  \Theta_F))\\ \alpha_3\vec(\mathfrak{R}(\D G\odot
  \Theta_G))\\ \alpha_3\vec(\mathfrak{I}(\D G\odot
  \Theta_G))}\in \R^{\bm s}~~ \text{and} ~~ \D \mathcal{Z}:=\bmatrix{\beta_1\RR(\D q)\\ \beta_1\I(\D q)\\\beta_2 \RR(\D r)\\\beta_2 \I(\D r)}\in \R^{2(n+m)}.
\end{align}
 Note that 
\begin{align}
 \nonumber    \alpha_1^2\|E\|^2_{\mathsf{F}}&= \alpha_1^2\|\RR(E)\|_{\mathsf{F}}^2+\alpha_1^2\|\I(E)\|_{\mathsf{F}}^2\\
     &=\|\alpha_1\mathfrak{D}_{S,n}\vec_{S}(\mathfrak{R}\D E)\|_2^2+\|\alpha_1\mathfrak{D}_{SK,n}\vec_{SK}(\mathfrak{I}\D E)\|_2^2\\ \nonumber     &=\left\|\bmatrix{\alpha_1\mathfrak{D}_{S,n}\vec_{S}(\mathfrak{R}\D E)\\ \alpha_1\mathfrak{D}_{SK,n}\vec_{SK}(\mathfrak{I}\D E)}\right\|_2^2.
 \end{align}

The following theorem provides a compact representation of the structured BE $ \bm{\xi}_{\tt sps}^{\mathcal{G}_1}(\widehat{\bm{u}},\widehat{\bm{p}})$.
\begin{theorem}\label{th1:case1}
  Assume that $E\in \mathbb{HC}^{n\times n},$ $F\in \C^{m\times n},$ $G\in \C^{m\times m}$  and $\widehat{\bm{x}}=[\widehat{\bm{u}}^{\T},\widehat{\bm{p}}^{\T}]^{\T}$ is a computed solution of the GSPP \eqref{eq11}. Then, the structured BE $\bm{\xi}^{\mathcal{G}_1}_{\tt sps}(\widehat{\bm{u}},\widehat{\bm{p}})$ with preserving sparsity is given by  
    \begin{eqnarray}
      \bm{\xi}_{\tt sps}^{\mathcal{G}_1}(\widehat{\bm{u}},\widehat{\bm{p}})= \left\| \bmatrix{\mathbf{X}_1& \mathcal{I}_1\\ \mathbf{X}_2& \mathcal{I}_2}^{\T}\left(\bmatrix{\mathbf{X}_1& \mathcal{I}_1\\ \mathbf{X}_2& \mathcal{I}_2}\bmatrix{\mathbf{X}_1& \mathcal{I}_1\\ \mathbf{X}_2& \mathcal{I}_2}^{\T}\right)^{-1}\bmatrix{\RR(Q)\\ \I(Q)\\ \RR(R)\\ \I(R)}\right\|_2,
    \end{eqnarray}
    where $Q=q-E\u-F^*\p$, $R=r-F\u-G\p,$ $\mathcal{I}_1=\bmatrix{ -{\beta_1}^{-1} I_{2n} & 0_{2n\times 2m}}\in \R^{2n\times 2(n+m)}$ and $\mathcal{I}_2=\bmatrix{ 0_{2m \times 2n}&-{\beta_2}^{-1} I_{2m}}\in \R^{2m\times 2(n+m)}.$
\end{theorem}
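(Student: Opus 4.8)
The plan is to reduce the structured backward error minimization to a linear least-norm problem and then apply Lemma \ref{sec2:lemma}. First I would invoke Lemma \ref{lemm1:sec3} with $\D H=\D F$ (so that the $(2,1)$-block perturbation is tied to the $(1,2)$-block perturbation, consistent with the constraint set $\mathcal{G}_1$), which rewrites the nonlinear-looking constraint $(\mathfrak{B}+\D\mathfrak{B})\widehat{\bm x}={\bm f}+\D{\bm f}$ as four real linear systems in the unknowns $\mathfrak{R}(\D E),\mathfrak{I}(\D E),\mathfrak{R}(\D F),\mathfrak{I}(\D F),\mathfrak{R}(\D G),\mathfrak{I}(\D G),\mathfrak{R}(\D q),\mathfrak{I}(\D q),\mathfrak{R}(\D r),\mathfrak{I}(\D r)$, with right-hand sides $\mathfrak{R}(Q),\mathfrak{I}(Q),\mathfrak{R}(R),\mathfrak{I}(R)$.

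Next I would \emph{vectorize}: using the identities in \eqref{kron} together with Lemma \ref{lemma2:SEC2} (to encode both the Hermitian structure of $\D E$ — i.e.\ $\mathfrak{R}(\D E)$ symmetric, $\mathfrak{I}(\D E)$ skew-symmetric — and the sparsity pattern enforced by the Hadamard products with $\Theta_E,\Theta_F,\Theta_G$), each term of the four real equations becomes a matrix times one of the blocks of the vectorized unknown. The scaling matrices $\mathfrak{D}_{S,n},\mathfrak{D}_{SK,n}$ are chosen precisely so that $\|\alpha_1\mathfrak{D}_{S,n}\vec_S(\mathfrak{R}\D E)\|_2^2+\|\alpha_1\mathfrak{D}_{SK,n}\vec_{SK}(\mathfrak{I}\D E)\|_2^2=\alpha_1^2\|E\|_{\mathsf F}^2$ (the off-diagonal entries, counted once in the generator vector but appearing twice in the Frobenius norm, get the factor $\sqrt2$), as already displayed before the theorem. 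With the variable groupings $\D\mathcal{Y}$ and $\D\mathcal{Z}$ from \eqref{eq:312}, the four linear systems collapse to the single real linear system
\[
\bmatrix{\mathbf{X}_1 & \mathcal{I}_1\\ \mathbf{X}_2 & \mathcal{I}_2}\bmatrix{\D\mathcal{Y}\\ \D\mathcal{Z}}=\bmatrix{\mathfrak{R}(Q)\\ \mathfrak{I}(Q)\\ \mathfrak{R}(R)\\ \mathfrak{I}(R)},
\]
and, by construction of the weighted norm $\bm\zeta^{\sigma_1}$ and of the block matrices $\widehat{\mathbf X}_1,\widehat{\mathbf X}_2,\mathcal{I}_1,\mathcal{I}_2$ (the $\alpha_i$'s absorbed into $\D\mathcal Y$, the $\beta_i$'s into $\D\mathcal Z$ and correspondingly the $\beta_i^{-1}$ in $\mathcal I_1,\mathcal I_2$), one has exactly $\bm\zeta^{\sigma_1}(\D E,\D F,\D G,\D q,\D r)=\big\|[\D\mathcal Y^{\T},\D\mathcal Z^{\T}]^{\T}\big\|_2$. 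Hence $\bm\xi_{\tt sps}^{\mathcal G_1}(\widehat{\bm u},\widehat{\bm p})$ equals the minimum $2$-norm over all solutions of this linear system.

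Finally I would apply Lemma \ref{sec2:lemma}: the system is consistent (a backward perturbation always exists, e.g.\ put everything into $\D q,\D r$, since $\mathcal I_1,\mathcal I_2$ alone already have full row rank, which also shows the coefficient matrix has full row rank so that $AA^\dagger=I$ and the consistency condition is automatic), and the minimum-norm solution is $A^\dagger b$ where $A=\bmatrix{\mathbf{X}_1 & \mathcal{I}_1\\ \mathbf{X}_2 & \mathcal{I}_2}$ and $b=[\mathfrak{R}(Q)^{\T},\mathfrak{I}(Q)^{\T},\mathfrak{R}(R)^{\T},\mathfrak{I}(R)^{\T}]^{\T}$. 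Since $A$ has full row rank, $A^\dagger=A^{\T}(AA^{\T})^{-1}$, which gives the stated closed form for the norm. The main obstacle is purely bookkeeping: verifying that the vectorized coefficients genuinely assemble into the matrices $\widehat{\mathbf X}_1,\widehat{\mathbf X}_2$ exactly as written — in particular getting the placement of $N_1,N_2$ (the Hermitian generator encodings), of $\Sigma_F,\Sigma_G$ (the sparsity projectors), and the signs and $\alpha_i^{-1}$ factors right in the real/imaginary split — and confirming that the weighted Frobenius norm is isometric to the Euclidean norm of the scaled generator vector; the functional-analytic content beyond that is just the one citation to Lemma \ref{sec2:lemma}.
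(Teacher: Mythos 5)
Your proposal is correct and follows essentially the same route as the paper's proof: Lemma \ref{lemm1:sec3} with $\D H=\D F$ for the real/imaginary split, vectorization via \eqref{kron} and Lemma \ref{lemma2:SEC2} to encode the Hermitian structure and sparsity, the $\mathfrak{D}_{S,n},\mathfrak{D}_{SK,n}$ scalings for the norm isometry, and Lemma \ref{sec2:lemma} with the full-row-rank form $A^{\dagger}=A^{\T}(AA^{\T})^{-1}$ of the minimum-norm solution. Your explicit observation that $[\mathcal{I}_1^{\T}\ \mathcal{I}_2^{\T}]^{\T}$ alone guarantees full row rank (and hence consistency) is a small but welcome addition, as the paper merely asserts this.
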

\begin{proof}
    Let $\widehat{\bm{x}}=[\widehat{\bm{u}}^{\T},\widehat{\bm{p}}^{\T}]^{\T}$ be a computed solution of the GSPP \eqref{eq11} with $E\in \mathbb{HC}^{n\times n}$. Then, we need to find the perturbations $\D q\in \C^{n},$  $\D r\in \C^{m}$ and sparsity preserving perturbation matrices $\D E\in \mathbb{HC}^{n\times n},$ $\D F\in \C^{m\times n}$
 and $\D G\in \C^{m\times m}$ so that \eqref{s3:eq24} holds.    Therefore, we replace $\D E,$ $\D F$ and $\D G$ with 
    $\D E \odot \Theta_E,$ $\D F\odot \Theta_F$ and  $\D G\odot \Theta_G,$ respectively, such the following holds:
    \begin{align}
       \bmatrix{E+(\D E \odot \Theta_E) & (F+(\D F \odot \Theta_F))^{*}\\ F+(\D F \odot \Theta_F) & G+(\D G \odot \Theta_G)}\bmatrix{\widehat{\bm{u}}\\ \widehat{\bm{p}}}=\bmatrix{q+\D q\\ r+\D r}.
  \end{align} 
  Then using Lemma \ref{lemm1:sec3} with $H=F$, we have
   \begin{align}\label{eq1:th1}
  \left\{\begin{array}{r}
  \mathfrak{R}(\D E\odot \Theta_E)\RR({\widehat{\u}})-\I(\D E\odot \Theta_E)\mathfrak{I}({\widehat{\u}})+ \mathfrak{R}(\D F\odot \Theta_F)^{\T}\RR({\widehat{\p}})\\
  + \I(\D F\odot \Theta_F)^{\T}\mathfrak{I}({\widehat{\p}}) - \RR(\D q)=\RR(Q),\\
   \RR(\D E\odot \Theta_E)\I({\widehat{\u}})+\I(\D E\odot \Theta_E)\RR({\widehat{\u}})+\RR(\D F\odot \Theta_F)^{\T}\I({\widehat{\p}})\\
   -\I(\D F\odot \Theta_F)^{\T}\RR({\widehat{\p}})-\I(\D q)=\I(Q),
  \end{array}
  \right.
  \end{align}
  and 
  \begin{align}\label{eq11:th1}
  \left\{\begin{array}{r}
  \mathfrak{R}(\D F\odot \Theta_F)\RR({\widehat{\u}})-\I(\D F\odot \Theta_F)\mathfrak{I}({\widehat{\u}})+ \mathfrak{R}(\D G\odot \Theta_G)\RR({\widehat{\p}})\\
  - \I(\D G\odot \Theta_G)\mathfrak{I}({\widehat{\p}}) - \RR(\D r)=\RR(R),\\
   \RR(\D F\odot \Theta_F)\I({\widehat{\u}})+\I(\D F\odot \Theta_F)\RR({\widehat{\u}})+\RR(\D G\odot \Theta_G)\I({\widehat{\p}})\\
   +\I(\D G\odot \Theta_G)\RR({\widehat{\p}})-\I(\D r)=\I(R).
  \end{array}
  \right.
  \end{align}
 Now, using the properties of the vec operator and Kronecker product on \eqref{eq1:th1}, we obtain
  
   \begin{align}\label{eq2:th1}
  \left\{\begin{array}{r}
 (\mathfrak{R}(\widehat{\u})^{\T}\otimes I_n)\vec(\mathfrak{R}(\D E\odot
  \Theta_E))-(\mathfrak{I}(\widehat{\u})^{\T}\otimes I_n)\vec(\mathfrak{I}(\D E\odot
  \Theta_E))\\
  + ( I_n\otimes\mathfrak{R}(\widehat{\p})^{\T})\vec(\mathfrak{R}(\D F\odot
  \Theta_F))
  {\color{blue}+} ( I_n\otimes\mathfrak{I}(\widehat{\p})^{\T})\vec(\mathfrak{I}(\D F\odot
  \Theta_F)) - \RR(\D q)=\RR(Q),\\
   (\mathfrak{I}(\widehat{\u})^{\T}\otimes I_n)\vec(\mathfrak{R}(\D E\odot
  \Theta_E))+(\mathfrak{R}(\widehat{\u})^{\T}\otimes I_n)\vec(\mathfrak{I}(\D E\odot
  \Theta_E)\\
  + ( I_n\otimes\mathfrak{I}(\widehat{\p})^{\T})\vec(\mathfrak{R}(\D F\odot
  \Theta_F))
  - ( I_n\otimes\mathfrak{R}(\widehat{\p})^{\T})\vec(\mathfrak{I}(\D F\odot
  \Theta_F)) - \I(\D q)=\I(Q).
  \end{array}
  \right.
  \end{align}
 As $\D E\in \mathbb{HC}^{n\times n},$ we have $\RR(\D E)\in \mathbb{SR}^{n\times n}$ and $\I(\D E)\in \mathbb{SKR}^{n\times n}.$  Further, we have $\D E \odot \Theta_E\in \mathbb{HC}^{n\times n},$ $\RR(\D E\odot \Theta_E)\in \mathbb{SR}^{n\times n}$ and $\I(\D E\odot \Theta_E)\in \mathbb{SKR}^{n\times n}.$ Hence, using Lemma \ref{lemma2:SEC2} on \eqref{eq2:th1}, we get
    \begin{align}\label{eq:321}
  \left\{\begin{array}{r}
 (\mathfrak{R}(\widehat{\u})^{\T}\otimes I_n)\mathcal{J}^n_{S}\Phi_{E}\vec_{S}(\mathfrak{R}(\D E\odot
  \Theta_E))-(\mathfrak{I}(\widehat{\u})^{\T}\otimes I_n)\mathcal{J}^n_{SK}\Psi_{E}\vec_{SK}(\mathfrak{I}(\D E\odot
  \Theta_E))\\
  + ( I_n\otimes\mathfrak{R}(\widehat{\p})^{\T})\Sigma_F\vec(\mathfrak{R}(\D F\odot
  \Theta_F))
  {\color{blue}+} ( I_n\otimes\mathfrak{I}(\widehat{\p})^{\T})\Sigma_F\vec(\mathfrak{I}(\D F\odot
  \Theta_F)) - \RR(\D q)=\RR(Q),\\
   (\mathfrak{I}(\widehat{\u})^{\T}\otimes I_n)\mathcal{J}^n_{S}\Phi_{E}\vec_{S}(\mathfrak{R}(\D E\odot
  \Theta_E))+(\mathfrak{R}(\widehat{\u})^{\T}\otimes I_n)\mathcal{J}^n _{SK}\Psi_{E}\vec_{SK}(\mathfrak{I}(\D E\odot
  \Theta_E)\\
  + ( I_n\otimes\mathfrak{I}(\widehat{\p})^{\T})\Sigma_F\vec(\mathfrak{R}(\D F\odot
  \Theta_F))
  - ( I_n\otimes\mathfrak{R}(\widehat{\p})^{\T})\Sigma_F\vec(\mathfrak{I}(\D F\odot
  \Theta_F)) - \I(\D q)=\I(Q).
  \end{array}
  \right.
  \end{align}
  %
%
 Then, \eqref{eq:321} can be reformulate as
\begin{align}\label{eq322}
    \mathbf{X}_1\D \mathcal{Y}+ \bmatrix{ -{\color{blue}\beta_1}^{-1} I_{2n} & 0_{2m}} \D \mathcal{Z}=\bmatrix{\RR(Q)\\ \I(Q)}.
\end{align}
%
  %
In a similar manner, from \eqref{eq11:th1}, we can deduce the following:
  \begin{align}\label{eq323}
      \mathbf{X}_2 \D \mathcal{Y} + \bmatrix{ 0_{2n}&-{\color{blue}\beta_2}^{-1} I_{2m}} \D \mathcal{Z}=\bmatrix{ \RR(R)\\ \I(R)}.
  \end{align}
  Therefore, combining \eqref{eq322} and \eqref{eq323}, we get
  \begin{align}\label{eq319}
   \bmatrix{\mathbf{X}_1\\ \mathbf{X}_2}\D \mathcal{Y}+\bmatrix{\mathcal{I}_1\\ \mathcal{I}_2} \D \mathcal{Z}=\bmatrix{\RR(Q)\\ \I(Q)\\ \RR(R)\\ \I(R)}
     \Longleftrightarrow
      \bmatrix{\mathbf{X}_1& \mathcal{I}_1\\ \mathbf{X}_2& \mathcal{I}_2}\bmatrix{\D \mathcal{Y}\\ \D \mathcal{Z}}=\bmatrix{\RR(Q)\\ \I(Q)\\ \RR(R)\\ \I(R)}.
  \end{align}
 Since the matrix \( \bmatrix{\mathbf{X}_1 & \mathcal{I}_1 \\ \mathbf{X}_2 & \mathcal{I}_2} \) has full row rank, the consistency condition in Lemma \ref{sec2:lemma} is satisfied, and the minimal norm solution of \eqref{eq319} is expressed as:
  \begin{align}\label{eq321}
    \nonumber  \bmatrix{\D \mathcal{Y}\\ \D \mathcal{Z}}_{\tt opt}&=\bmatrix{\mathbf{X}_1& \mathcal{I}_1\\ \mathbf{X}_2& \mathcal{I}_2}^{\dagger}\bmatrix{\RR(Q)\\ \I(Q)\\ \RR(R)\\ \I(R)}\\
    &=\bmatrix{\mathbf{X}_1& \mathcal{I}_1\\ \mathbf{X}_2& \mathcal{I}_2}^{\T}\left(\bmatrix{\mathbf{X}_1& \mathcal{I}_1\\ \mathbf{X}_2& \mathcal{I}_2}\bmatrix{\mathbf{X}_1& \mathcal{I}_1\\ \mathbf{X}_2& \mathcal{I}_2}^{\T}\right)^{-1}\bmatrix{\RR(Q)\\ \I(Q)\\ \RR(R)\\ \I(R)}. 
      \end{align}
  According to the Definition \ref{def:SBE}, we have
  \begin{align}
\nonumber\bm{\xi}^{\mathcal{G}_1}_{\tt sps}(\widehat{\bm{u}},\widehat{\bm{p}})&=\displaystyle{\min_{\left(\begin{array}{c}
       \D E\odot  E,\D F\odot  F,\\
     \D G\odot  G, \D q, \D r 
     \end{array}\right)\in\, \mathcal{G}_1}} {{\bm\zeta}^{{\sigma}_1}(\D E\odot  E,\D F\odot  F,\D G\odot  G, \D q, \D r)}\\ \nonumber
&=\min_{\left(\begin{array}{c}
       \D E\odot  E,\D F\odot  F,\\
     \D G\odot
  \Theta_G, \D q, \D r 
     \end{array}\right)\in\, \mathcal{G}_1} \left\|\bmatrix{\alpha_1\vec(\mathfrak{R}(\D E\odot
  \Theta_E))\\ \alpha_1\vec(\mathfrak{I}(\D E\odot
  \Theta_E))\\ \alpha_2\vec(\mathfrak{R}(\D F\odot
  \Theta_F))\\ \alpha_2\vec(\mathfrak{I}(\D F\odot
  \Theta_F))\\ \alpha_3\vec(\mathfrak{R}(\D G\odot
  \Theta_G))\\ \alpha_3\vec(\mathfrak{I}(\D G\odot
  \Theta_G))\\ \beta_1\RR(\D q)\\ \beta_1\I(\D q)\\\beta_2 \RR(\D r)\\\beta_2 \I(\D r)}\right\|_2\\ \nonumber
  &=\min_{\left(\begin{array}{c}
       \D E\odot  E,\D F\odot  F,\\ 
     \D G\odot
  \Theta_G, \D q, \D r 
     \end{array}\right)\in\, \mathcal{G}_1} \left\|\bmatrix{\alpha_1\mathfrak{D}_{S,n}\vec_{S}(\mathfrak{R}(\D E\odot
  \Theta_E))\\ \alpha_1\mathfrak{D}_{SK,n}\vec_{SK}(\mathfrak{I}(\D E\odot
  \Theta_E))\\ \alpha_2\vec(\mathfrak{R}(\D F\odot
  \Theta_F))\\ \alpha_2\vec(\mathfrak{I}(\D F\odot
  \Theta_F))\\ \alpha_3\vec(\mathfrak{R}(\D G\odot
  \Theta_G))\\ \alpha_3\vec(\mathfrak{I}(\D G\odot
  \Theta_G))\\ \beta_1\RR(\D q)\\ \beta_1\I(\D q)\\\beta_2 \RR(\D r)\\\beta_2 \I(\D r)}\right\|_2\\ \nonumber
     &=\min\left\{ \left\| \bmatrix{\D \mathcal{Y}\\ \D \mathcal{Z}}
 \right\|_2 {\Bigg |} \bmatrix{\mathbf{X}_1& \mathcal{I}_1\\ \mathbf{X}_2& \mathcal{I}_2}\bmatrix{\D \mathcal{Y}\\ \D \mathcal{Z}}=\bmatrix{\RR(Q)\\ \I(Q)\\ \RR(R)\\ \I(R)} \right\}\\ 
 &= \left\| \bmatrix{\D \mathcal{Y}\\ \D \mathcal{Z}}_{\tt{opt}}
 \right\|_2.
     \end{align}
Consequently, the proof is conclusive.
\end{proof}

\begin{remark}\label{remark1}
     Using \eqref{eq:312} and \eqref{eq321}, 
 the optimal perturbation matrix  $\D E^{\tt sps}_{\tt{opt}}$ is obtained as follows:
   $$\D E^{\tt sps}_{\tt{opt}}=\RR(\D E^{\tt sps}_{\tt{opt}})+i\I(\D E^{\tt sps}_{\tt{opt}}),$$
   where 
   $$\vec_{S}(\RR(\D E^{\tt sps}_{\tt{opt}}))=\alpha_1^{-1}\mathfrak{D}_{S,n}^{-1}\bmatrix{I_{\bm{n}_1} & 0_{\bm{n}_1\times (\bm{l}-\bm{n}_1)}}\bmatrix{\D \mathcal{Y}\\ \D \mathcal{Z}}_{\tt{opt }},$$  $$\vec_{SK}(\I(\D E^{\tt sps}_{\tt{opt}}))=\alpha_1^{-1}\mathfrak{D}_{SK,n}^{-1}\bmatrix{0_{\bm{n}_2\times \bm{n}_1}&I_{\bm{n}_2} & 0_{\bm{n}_2\times (\bm{l}-n^2)}}\bmatrix{\D \mathcal{Y}\\ \D \mathcal{Z}}_{\tt{opt }},$$
   $\bm{n}_1=\frac{n(n+1)}{2},$  $\bm{n}_2=\frac{n(n-1)}{2},$ and $\bm{l}=\bm{s}+2(m+n).$
   Similarly, optimal backward perturbation matrix  $\D F^{\tt sps}_{\tt{opt}}$, $\D G^{\tt sps}_{\tt{opt}},$ $\D r_{\tt{opt}},$ and $\D q_{\tt{opt}}$ are given by
   $$\vec(\D F^{\tt sps}_{\tt{opt}})=\alpha_2^{-1}\bmatrix{0_{nm\times n^2} &I_{nm} & i I_{nm}& 0_{nm\times  (\bm{l}-n^2-2nm)}}\bmatrix{\D \mathcal{Y}\\ \D \mathcal{Z}}_{\tt{opt }},$$ 
   $$\vec(\D G^{\tt sps}_{\tt{opt}})=\alpha_3^{-1}\bmatrix{0_{m^2\times (n^2+2nm)} &I_{m^2} &i I_{m^2}& 0_{m^2\times 2(m+n)}}\bmatrix{\D \mathcal{Y}\\ \D \mathcal{Z}}_{\tt{opt}},$$  
   $$\D q_{\tt{opt}}=\beta_1^{-1}\bmatrix{0_{n\times ( \bm{l}-2(n+m))}&I_n &iI_n& 0_{n\times 2m}}\bmatrix{\D \mathcal{Y}\\ \D \mathcal{Z}}_{\tt{opt }},$$
and $$\D r_{\tt{opt}}=\beta_2^{-1}\bmatrix{0_{m\times (\bm{l}-2m)}&I_m &iI_m}\bmatrix{\D \mathcal{Y}\\ \D \mathcal{Z}}_{\tt{opt }}.$$

\end{remark}

We now compute a compact formula for the structured BE for case (\textit{i}), in which the sparsity pattern of  \( \mathfrak{B} \) is not retained.
\begin{corollary}\label{coro1}
 Assume that $E\in \mathbb{HC}^{n\times n},$ $F\in \C^{m\times n}$ and $G\in \C^{m\times m}$  and $\widehat{\bm{x}}=[\widehat{\bm{u}}^{\T},\widehat{\bm{p}}^{\T}]^{\T}$ is a computed solution of the GSPP \eqref{eq11}. Then, the structured BE $\bm{\xi}^{\mathcal{G}_1}(\widehat{\bm{u}},\widehat{\bm{p}})$ without preserving sparsity is expressed as  
    \begin{eqnarray*}
      \bm{\xi}^{\mathcal{G}_1}(\widehat{\bm{u}},\widehat{\bm{p}})= \left\| \bmatrix{\mathbf{Y}_1& \mathcal{I}_1\\ \mathbf{Y}_2& \mathcal{I}_2}^{\T}\left(\bmatrix{\mathbf{Y}_1& \mathcal{I}_1\\ \mathbf{Y}_2& \mathcal{I}_2}\bmatrix{\mathbf{Y}_1& \mathcal{I}_1\\ \mathbf{Y}_2& \mathcal{I}_2}^{\T}\right)^{-1}\bmatrix{\RR(Q)\\ \I(Q)\\ \RR(R)\\ \I(R)}\right\|_2,
    \end{eqnarray*}
    where $$\mathbf{Y}_1= [\widehat{\mathbf{Y}}_1~~ {0}_{2n\times 2m^2}],~~ \mathbf{Y}_2= [ {0}_{2m^2\times n^2}~~ \widehat{\mathbf{Y}}_2],$$
    {\footnotesize\begin{align*}
    \widehat{\mathbf{Y}}_1= \bmatrix{\alpha_1^{-1}(\mathfrak{R}(\widehat{\u})^{\T}\otimes I_n)\mathcal{J}^n_{S}\mathfrak{D}^{-1}_{S,n} &-\alpha_1^{-1}(\mathfrak{I}(\widehat{\u})^{\T}\otimes I_n)\mathcal{J}^n_{SK}\mathfrak{D}^{-1}_{SK,n}& \alpha_2^{-1} (I_n\otimes\mathfrak{R}(\widehat{\p})^{\T}) & \alpha_2^{-1}(I_n\otimes\mathfrak{I}(\widehat{\p})^{\T})\\
     \alpha_1^{-1} (\mathfrak{I}(\widehat{\u})^{\T}\otimes I_n)\mathcal{J}^n_{S}\mathfrak{D}^{-1}_{S,n} &  \alpha_1^{-1}(\mathfrak{R}(\widehat{\u})^{\T}\otimes I_n)\mathcal{J}^n_{SK}\mathfrak{D}^{-1}_{SK,n}&  \alpha_2^{-1}(I_n\otimes\mathfrak{I}(\widehat{\p})^{\T})& -  \alpha_2^{-1}(I_n\otimes\mathfrak{R}(\widehat{\p})^{\T})} ,
  \end{align*}}
  and
  {\small \begin{align*}
    \widehat{\mathbf{Y}}_2= \bmatrix{ \alpha_2^{-1}(\mathfrak{R}(\widehat{\u})^{\T}\otimes I_m) &-\alpha_2^{-1}(\mathfrak{I}(\widehat{\u})^{\T}\otimes I_m)& \alpha_3^{-1}(\mathfrak{R}(\widehat{\p})^{\T}\otimes I_m) & -\alpha_3^{-1}(\mathfrak{I}(\widehat{\p})^{\T}\otimes I_m)\\
      \alpha_2^{-1}(\mathfrak{I}(\widehat{\u})^{\T}\otimes I_m) &\alpha_2^{-1} (\mathfrak{R}(\widehat{\u})^{\T}\otimes I_m) &  \alpha_3^{-1}(\mathfrak{I}(\widehat{\p})^{\T}\otimes I_m) & \alpha_3^{-1} (\mathfrak{R}(\widehat{\p})^{\T}\otimes I_m)}.
  \end{align*}
  }
\end{corollary}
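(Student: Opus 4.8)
\textbf{Proof proposal for Corollary \ref{coro1}.}
The plan is to mimic the proof of Theorem \ref{th1:case1}, the only change being that no sparsity mask is applied to the perturbation matrices. Concretely, I would start from the block system \eqref{s3:eq24} defining $\mathcal{G}_1$ (with $H = F$), apply Lemma \ref{lemm1:sec3} to split each block equation into its real and imaginary parts, and then vectorize using the Kronecker identities \eqref{kron}. This is exactly the passage from \eqref{eq:31} to \eqref{eq2:th1}, except that now the arguments of $\vec(\cdot)$ are $\mathfrak{R}(\D E), \mathfrak{I}(\D E), \mathfrak{R}(\D F), \mathfrak{I}(\D F), \mathfrak{R}(\D G), \mathfrak{I}(\D G)$ rather than their Hadamard products with $\Theta_E, \Theta_F, \Theta_G$; equivalently, one sets $\Theta_E = \Theta_F = \Theta_G = {\bf 1}$, so that $\Sigma_F, \Sigma_G$ become identity matrices and drop out, while $\Phi_E, \Psi_E$ become identity (by the Remark following Lemma \ref{lemma2:SEC2}).

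Next I would encode the Hermitian structure of $\D E$. Since $\D E \in \mathbb{HC}^{n\times n}$ means $\mathfrak{R}(\D E) \in \mathbb{SR}^{n\times n}$ and $\mathfrak{I}(\D E) \in \mathbb{SKR}^{n\times n}$, Lemmas \ref{lemma1} and \ref{lemma2} give $\vec(\mathfrak{R}(\D E)) = \mathcal{J}^n_S \vec_S(\mathfrak{R}(\D E))$ and $\vec(\mathfrak{I}(\D E)) = \mathcal{J}^n_{SK}\vec_{SK}(\mathfrak{I}(\D E))$. Inserting these into the vectorized equations and introducing the scaled generator variables $\alpha_1 \mathfrak{D}_{S,n}\vec_S(\mathfrak{R}\D E)$ and $\alpha_1 \mathfrak{D}_{SK,n}\vec_{SK}(\mathfrak{I}\D E)$ (the diagonal matrices $\mathfrak{D}_{S,n}, \mathfrak{D}_{SK,n}$ are precisely what make the weighted Frobenius norm $\alpha_1 \|\D E\|_{\mathsf F}$ equal to the Euclidean norm of the scaled generator vector, as already noted in the displayed computation preceding Theorem \ref{th1:case1}), I arrive at a linear system $\bmatrix{\mathbf{Y}_1 & \mathcal{I}_1 \\ \mathbf{Y}_2 & \mathcal{I}_2}\bmatrix{\D\mathcal{Y}\\ \D\mathcal{Z}} = \bmatrix{\mathfrak{R}(Q)\\\mathfrak{I}(Q)\\\mathfrak{R}(R)\\\mathfrak{I}(R)}$ whose coefficient blocks $\widehat{\mathbf{Y}}_1, \widehat{\mathbf{Y}}_2$ are exactly $\widehat{\mathbf{X}}_1, \widehat{\mathbf{X}}_2$ with $N_1, N_2$ replaced by $\mathcal{J}^n_S \mathfrak{D}^{-1}_{S,n}, \mathcal{J}^n_{SK}\mathfrak{D}^{-1}_{SK,n}$ and with $\Sigma_F, \Sigma_G$ removed — which is the statement of the corollary.

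Finally, by Definition \ref{def:SBE}, $\bm{\xi}^{\mathcal{G}_1}(\widehat{\bm u}, \widehat{\bm p})$ equals the minimum of $\|[\D\mathcal{Y}^{\T}, \D\mathcal{Z}^{\T}]^{\T}\|_2$ over all solutions of this system; since $\bmatrix{\mathbf{Y}_1 & \mathcal{I}_1 \\ \mathbf{Y}_2 & \mathcal{I}_2}$ has full row rank (the blocks $\mathcal{I}_1, \mathcal{I}_2$ alone already span the row space, being $-\beta_1^{-1}I_{2n}$ and $-\beta_2^{-1}I_{2m}$ up to zero padding), Lemma \ref{sec2:lemma} gives the minimum-norm solution as the Moore-Penrose solution, and using $A^{\dagger} = A^{\T}(AA^{\T})^{-1}$ for full-row-rank $A$ yields the claimed formula. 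The only point requiring a little care — and the one I would flag as the main obstacle — is the bookkeeping of block sizes and the verification that replacing the masked operators by unmasked ones is a faithful specialization rather than a genuinely new argument; once the $\Theta = {\bf 1}$ reduction is made explicit via the Remark after Lemma \ref{lemma2:SEC2}, the rest is a verbatim repeat of the proof of Theorem \ref{th1:case1}, so I would simply say so rather than reproduce it.
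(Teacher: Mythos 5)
Your proposal is correct and follows exactly the paper's route: the paper's own proof of this corollary is just the one-line observation that setting $\Theta_E={\bf 1}_{n\times n}$, $\Theta_F={\bf 1}_{m\times n}$, $\Theta_G={\bf 1}_{m\times m}$ (so that $\Phi_E,\Psi_E,\Sigma_F,\Sigma_G$ become identities) reduces the statement to the argument of Theorem \ref{th1:case1}. Your additional remark that the $[\mathcal{I}_1;\mathcal{I}_2]$ block alone guarantees full row rank is a correct and welcome justification of the consistency step.
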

\proof As we are not preserving the sparsity structure of  $E$, $F$ and $G$, by setting $\Theta_E = {\bf 1}_{n\times n}$, $\Theta_F = {\bf 1}_{m\times n}$ and $\Theta_G = {\bf 1}_{m\times m}$, the proof proceeds accordingly. $\blacksquare$

In the following, we derive the structured BE for the GSPP \eqref{eq11} when $E\in \mathbb{HC}^{n\times n}$ and $G=0_{m\times m}.$
\begin{corollary}
   Assume that $E\in \mathbb{HC}^{n\times n},$ $F\in \C^{m\times n}$ and $G=0_{m\times m}$  and $\widehat{\bm{x}}=[\widehat{\bm{u}}^{\T},\widehat{\bm{p}}^{\T}]^{\T}$ is a computed solution of the GSPP \eqref{eq11}. Then, the structured BE $\bm{\xi}_{\tt sps}^{\mathcal{G}_1}(\widehat{\bm{u}},\widehat{\bm{p}})$ with preserving sparsity is given by  \begin{eqnarray}
      \bm{\xi}^{\mathcal{G}_1}_{\tt sps}(\widehat{\bm{u}},\widehat{\bm{p}})= \left\| \bmatrix{\widehat{\mathbf{X}}_1& \mathcal{I}_1\\ \mathbf{Z}& \mathcal{I}_2}^{\T}\left(\bmatrix{\widehat{\mathbf{X}}_1& \mathcal{I}_1\\ \mathbf{Z}& \mathcal{I}_2}\bmatrix{\widehat{\mathbf{X}}_1& \mathcal{I}_1\\ \mathbf{Z}& \mathcal{I}_2}^{\T}\right)^{-1}\bmatrix{\RR(Q)\\ \I(Q)\\ \RR(\widehat{R})\\ \I(\widehat{R})}\right\|_2,
    \end{eqnarray}
    where $\mathbf{Z}= [ {0}_{2m^2\times n^2}~~ \widehat{\mathbf{Z}}],$  $$\widehat{\mathbf{Z}}=\bmatrix{ \alpha_2^{-1}(\mathfrak{R}(\widehat{\u})^{\T}\otimes I_m)\Sigma_F &-\alpha_2^{-1}(\mathfrak{I}(\widehat{\u})^{\T}\otimes I_m)\Sigma_F\\
      \alpha_2^{-1}(\mathfrak{I}(\widehat{\u})^{\T}\otimes I_m)\Sigma_F &\alpha_2^{-1} (\mathfrak{R}(\widehat{\u})^{\T}\otimes I_m)\Sigma_F}$$
and $\widehat{R}=r-F\widehat{\u}.$
\end{corollary}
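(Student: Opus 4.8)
The plan is to specialize Theorem~\ref{th1:case1} to the degenerate case $G = 0_{m\times m}$. The essential observation is that when $G = 0$, the block $\D G$ is forced to vanish as well: since we are preserving the sparsity pattern of $\mathfrak{B}$ and $G \equiv 0$, we have $\Theta_G = 0_{m\times m}$, hence $\D G \odot \Theta_G = 0$. Thus $\Sigma_G = \diag(\vec(\Theta_G)) = 0$, and consequently the columns of $\widehat{\mathbf X}_2$ corresponding to $\vec(\mathfrak R(\D G \odot \Theta_G))$ and $\vec(\mathfrak I(\D G \odot \Theta_G))$ become zero columns.

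First I would run through the proof of Theorem~\ref{th1:case1} verbatim with $H = F$ and $G = 0$, so that the reduction in Lemma~\ref{lemm1:sec3} yields $R = r - F\widehat{\u}$ (the $G\widehat{\p}$ term drops out), which I call $\widehat R$. The systems \eqref{eq11:th1} then involve only $\D F$ (no $\D G$), and the variable vector $\D\mathcal Y$ loses its last two blocks $\alpha_3 \vec(\mathfrak R(\D G \odot \Theta_G))$ and $\alpha_3 \vec(\mathfrak I(\D G \odot \Theta_G))$. Correspondingly the matrix $\mathbf X_1$ loses its trailing zero block $0_{2n\times 2m^2}$ and collapses to $\widehat{\mathbf X}_1$, while $\mathbf X_2$ collapses to $\mathbf Z = [\,0_{2m^2\times n^2}\ \ \widehat{\mathbf Z}\,]$ with $\widehat{\mathbf Z}$ as displayed — precisely $\widehat{\mathbf X}_2$ with the $\Sigma_G$-columns deleted. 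The matrices $\mathcal I_1, \mathcal I_2$ are unchanged since they only involve the perturbations $\D q, \D r$.

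Next I would reassemble the reduced system in the form $\bmatrix{\widehat{\mathbf X}_1 & \mathcal I_1 \\ \mathbf Z & \mathcal I_2}\bmatrix{\D\mathcal Y \\ \D\mathcal Z} = \bmatrix{\mathfrak R(Q)\\ \mathfrak I(Q)\\ \mathfrak R(\widehat R)\\ \mathfrak I(\widehat R)}$, verify that this coefficient matrix still has full row rank (so Lemma~\ref{sec2:lemma} applies and the system is consistent), and conclude via the Moore–Penrose formula exactly as in Theorem~\ref{th1:case1} that $\bm\xi^{\mathcal G_1}_{\tt sps}(\widehat{\bm u},\widehat{\bm p})$ equals the minimum-norm solution, i.e. the claimed expression. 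A clean way to present this is simply to invoke Corollary~\ref{coro1}'s style of argument: set $\Theta_G = 0_{m\times m}$ (forced by $G = 0$) in Theorem~\ref{th1:case1} and drop the identically zero columns.

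The only real subtlety — and the place I would be careful — is the full-row-rank claim for the reduced matrix $\bmatrix{\widehat{\mathbf X}_1 & \mathcal I_1 \\ \mathbf Z & \mathcal I_2}$. In the original theorem full row rank is automatic because $\mathcal I_1$ and $\mathcal I_2$ together already contain $-\beta_1^{-1}I_{2n}$ and $-\beta_2^{-1}I_{2m}$ acting on disjoint coordinate blocks of $\D\mathcal Z$, which span all $2(n+m)$ output rows regardless of $\mathbf X_1, \mathbf X_2$; this argument survives the deletion of columns from $\mathbf X_1, \mathbf X_2$ untouched, so the rank condition still holds and the inverse $\big(\,\cdot\,(\cdot)^{\T}\big)^{-1}$ in the statement is well-defined. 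Hence the proof reduces to bookkeeping, and I would write:

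\proof Since $G = 0_{m\times m}$ and we preserve the sparsity pattern of $\mathfrak B$, we have $\Theta_G = 0_{m\times m}$, so $\D G \odot \Theta_G = 0$ and $\Sigma_G = 0$. Applying Lemma~\ref{lemm1:sec3} with $H = F$ and $G = 0$ gives $R = r - F\widehat{\u} = \widehat R$, and the analysis of Theorem~\ref{th1:case1} goes through verbatim after deleting from $\D\mathcal Y$ the two blocks $\alpha_3\vec(\mathfrak R(\D G\odot\Theta_G))$, $\alpha_3\vec(\mathfrak I(\D G\odot\Theta_G))$ and the corresponding (now zero) columns of $\mathbf X_1$ and $\mathbf X_2$; this reduces $\mathbf X_1$ to $\widehat{\mathbf X}_1$ and $\mathbf X_2$ to $\mathbf Z$. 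The matrix $\bmatrix{\widehat{\mathbf X}_1 & \mathcal I_1 \\ \mathbf Z & \mathcal I_2}$ retains full row rank because $\bmatrix{\mathcal I_1 \\ \mathcal I_2}$ already has full row rank, so Lemma~\ref{sec2:lemma} applies and the minimal-norm solution yields the stated formula for $\bm\xi^{\mathcal G_1}_{\tt sps}(\widehat{\bm u},\widehat{\bm p})$. $\blacksquare$
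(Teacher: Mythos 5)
Your proof is correct and takes essentially the same route as the paper: both specialize Theorem \ref{th1:case1} by eliminating the $\D G$ blocks so that $\mathbf{X}_1,\mathbf{X}_2$ collapse to $\widehat{\mathbf{X}}_1,\mathbf{Z}$ and $R$ becomes $\widehat{R}=r-F\widehat{\u}$. The only (cosmetic) difference is the mechanism for killing $\D G$ — the paper sends $\alpha_3\to\infty$ so the $\alpha_3^{-1}\Sigma_G$ columns vanish, while you note that $G=0$ forces $\Theta_G=0$ and hence $\Sigma_G=0$ directly — and your explicit check that full row rank survives via the block $\bmatrix{\mathcal{I}_1\\ \mathcal{I}_2}$ is a detail the paper leaves implicit.
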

\proof The proof follows by taking $\alpha_3\rightarrow \infty$ and $G=0_{m\times m}$ in Theorem \ref{th1:case1}. $\blacksquare$

\vspace{1.2mm}
The structured BE for the GSPP \eqref{eq11} is analyzed in \citet{BE2020BING} under the assumption that the block matrices are real, i.e., \( E = E^{\T} \in \mathbb{R}^{n \times n} \), \( F \in \mathbb{R}^{m \times n} \), \( G \in \mathbb{R}^{m \times m} \), and the vectors \( q \in \mathbb{R}^n \), \( r \in \mathbb{R}^m \). From our results, we can also derive the structured BE for this case, which is given as follows.
\begin{corollary}\label{BE_R1}
     Assume that $E\in \mathbb{SR}^{n\times n},$ $F\in \R^{m\times n},$ $G\in \R^{m\times m},$ $q\in \R^n,$ $r\in \R^m,$  and $\widehat{\bm{x}}=[\widehat{\bm{u}}^{\T},\widehat{\bm{p}}^{\T}]^{\T}$ is a computed solution of the GSPP \eqref{eq11}. Then, the structured BE $\bm{\xi}^{\mathcal{G}_1}_{\tt sps}(\widehat{\bm{u}},\widehat{\bm{p}})$ with preserving sparsity is given by  
    \begin{eqnarray}
      \bm{\xi}_{\tt sps}^{\mathcal{G}_1}(\widehat{\bm{u}},\widehat{\bm{p}})= \left\| \bmatrix{\mathbf{X}^R_1& \widetilde{\mathcal{I}}_1\\ \mathbf{X}^R_2& \widetilde{\mathcal{I}}_2}^{\T}\left(\bmatrix{\mathbf{X}^R_1& \widetilde{\mathcal{I}}_1\\ \mathbf{X}^R_2& \widetilde{\mathcal{I}}_2}\bmatrix{\mathbf{X}^R_1& \widetilde{\mathcal{I}}_1\\ \mathbf{X}^R_2& \widetilde{\mathcal{I}}_2}^{\T}\right)^{-1}\bmatrix{Q\\ R}\right\|_2,
    \end{eqnarray}
    where $Q=q-E\u-F^{\T}\p$, $R=r-F\u-G\p,$ $\widetilde{\mathcal{I}}_1=\bmatrix{ -{\beta_1}^{-1} I_{n} & 0_{n\times m}}\in \R^{n\times (n+m)},$  $\widetilde{\mathcal{I}}_2=\bmatrix{ 0_{m \times n}&-{\beta_2}^{-1} I_{m}}\in \R^{m\times (n+m)},$
    \begin{align*}
  \mathbf{X}^R_1= \bmatrix{\alpha_1^{-1}(\widehat{\u}^{\T}\otimes I_n)N_1 & \alpha_2^{-1} (I_n\otimes\widehat{\p}^{\T})\Sigma_F &0_{n\times m^2}} 
  \end{align*}
  and
 \begin{align*}
    \mathbf{X}^R_2= \bmatrix{ 0_{m\times \frac{n(n+1)}{2}}&\alpha_2^{-1}(\widehat{\u}^{\T}\otimes I_m)\Sigma_F & \alpha_3^{-1}(\widehat{\p}^{\T}\otimes I_m)\Sigma_G }.
  \end{align*}
\end{corollary}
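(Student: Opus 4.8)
The plan is to derive Corollary \ref{BE_R1} as a direct specialization of Theorem \ref{th1:case1} to the real case. First I would observe that when $E \in \mathbb{SR}^{n\times n}$, $F \in \R^{m\times n}$, $G \in \R^{m\times m}$, $q \in \R^n$, $r \in \R^m$, and the computed solution $\widehat{\bm x}$ is also real (which is implicit, since the system is real and nonsingular), all imaginary parts vanish: $\I(\D E) = 0$, $\I(\D F) = 0$, $\I(\D G) = 0$, $\I(\widehat{\u}) = 0$, $\I(\widehat{\p}) = 0$, $\I(\D q) = 0$, $\I(\D r) = 0$, and $\I(Q) = \I(R) = 0$. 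Consequently, in the reformulation \eqref{eq1:th1}--\eqref{eq11:th1} from Lemma \ref{lemm1:sec3}, the second equation in each pair degenerates to $0 = 0$, and only the real-part equations survive.

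Next I would trace through what remains of the block matrices $\mathbf{X}_1$ and $\mathbf{X}_2$. With $\I(\widehat{\u}) = \I(\widehat{\p}) = 0$, the second block-column of $\widehat{\mathbf X}_1$ (the $N_2$-term, carrying $\vec_{SK}(\I(\D E \odot \Theta_E))$) drops, as does the $\vec(\I(\D F \odot \Theta_F))$ column; similarly for $\widehat{\mathbf X}_2$. This leaves exactly the matrices $\mathbf{X}^R_1$ and $\mathbf{X}^R_2$ as stated, where now $N_1 = \mathcal{J}^n_S \Phi_E \mathfrak{D}^{-1}_{S,n}$ still appears because the symmetric structure of $\D E$ is preserved. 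The perturbation vectors $\D\mathcal{Y}$ and $\D\mathcal{Z}$ collapse to $\bmatrix{\alpha_1 \mathfrak{D}_{S,n}\vec_S(\D E \odot \Theta_E) \\ \alpha_2 \vec(\D F \odot \Theta_F) \\ \alpha_3 \vec(\D G \odot \Theta_G)}$ and $\bmatrix{\beta_1 \D q \\ \beta_2 \D r}$ respectively, and the identity-block terms $\mathcal{I}_1, \mathcal{I}_2 \in \R^{2n \times 2(n+m)}, \R^{2m \times 2(n+m)}$ reduce to $\widetilde{\mathcal{I}}_1 \in \R^{n\times(n+m)}$, $\widetilde{\mathcal{I}}_2 \in \R^{m\times(n+m)}$.

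Then the system \eqref{eq319} becomes $\bmatrix{\mathbf{X}^R_1 & \widetilde{\mathcal{I}}_1 \\ \mathbf{X}^R_2 & \widetilde{\mathcal{I}}_2}\bmatrix{\D\mathcal{Y} \\ \D\mathcal{Z}} = \bmatrix{Q \\ R}$, which still has full row rank (the $\widetilde{\mathcal{I}}$-blocks alone guarantee this), so Lemma \ref{sec2:lemma} applies and the minimal-norm solution is $\bmatrix{\mathbf{X}^R_1 & \widetilde{\mathcal{I}}_1 \\ \mathbf{X}^R_2 & \widetilde{\mathcal{I}}_2}^{\dagger}\bmatrix{Q \\ R}$, whose Euclidean norm is precisely the claimed expression via the identity $A^\dagger = A^{\T}(AA^{\T})^{-1}$ for full-row-rank $A$. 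Finally, since the weighted norm $\bm{\zeta}^{\sigma_1}$ restricted to real data equals $\|[\D\mathcal{Y}^{\T}, \D\mathcal{Z}^{\T}]^{\T}\|_2$ (the weighting matrices $\mathfrak{D}_{S,n}$ handle the off-diagonal double-counting of symmetric entries exactly as in the Hermitian case), the structured BE equals this norm. The main obstacle — really the only place needing care — is verifying that the real and imaginary parts genuinely decouple so that discarding the imaginary-part equations does not lose any constraint; this rests on the fact that a real nonsingular system forces the computed solution to be taken real, so $\I(\widehat{\u}) = \I(\widehat{\p}) = 0$, which I would state explicitly before proceeding.
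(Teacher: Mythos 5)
Your proposal is correct and follows essentially the same route as the paper: the published proof simply observes that real data forces $\I(\widehat{\bm u})=\I(\widehat{\bm p})=0$ and $\I(Q)=\I(R)=0$ and then substitutes into Theorem \ref{th1:case1}. Your version is in fact more careful than the paper's, since you also note why the imaginary-part equations and the corresponding blocks of $\mathbf{X}_1,\mathbf{X}_2$ drop out and why the minimization decouples so that the optimal perturbations may be taken real.
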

\proof Since  $E\in \mathbb{SR}^{n\times n},$ $F\in \R^{m\times n},$ $G\in \R^{m\times m},$ $q\in \R^n,$ $r\in \R^m,$ the computed solution $\widehat{\bm{x}}=[\widehat{\bm{u}}^{\T},\widehat{\bm{p}}^{\T}]^{\T}$ is real. Therefore, $\I(\widehat{\bm{u}})=0 $ and  $\I(\widehat{\bm{p}})=0.$ Furthermore, $\I(Q)=0$ and $\I(R)=0.$ Considering these values in Theorem \ref{th1:case1}, the desired structured BE is obtained. Hence, the proof is completed.
$\blacksquare$

By applying Remark \ref{remark1}, we can also obtain the optimal backward perturbation matrices for the structured BE.  The above result highlights the generalized nature of our proposed framework. Furthermore, our analysis ensures the preservation of the sparsity pattern of block matrices, unlike the results in \cite{BE2020BING}, which do not maintain the sparsity of the block matrices and provide the optimal backward perturbation matrices.
\subsection{Computation of the structured BE for case (\textit{ii})}
In this subsection, we derive a compact expression for the structured BE $\bm{\xi}_{\tt sps}^{\mathcal{G}_2} (\widehat{\bm{u}},\widehat{\bm{p}})$  by preserving the Hermitian structure of $G\in \mathbb{HC}^{m\times m}$ and preserving the sparsity of the coefficient matrix $\mathfrak{B}$.
Before starting the main theorem, we construct the following matrices.

\noindent Set 
$S_1:=\mathcal{J}^m_{S}\Phi_{G}\mathfrak{D}^{-1}_{S,m}\in \R^{m^2\times \frac{m(m+1)}{2}}, ~~S_2:=\mathcal{J}^m_{SK}\Psi_{G}\mathfrak{D}^{-1}_{SK,m}\in \R^{m^2\times \frac{m(m-1)}{2}},$
  $$\mathbf{K}_1:= [\widehat{\mathbf{K}}_1~~ {0}_{2n\times m^2}]~~\text{and}~~ \mathbf{K}_2:= [ {0}_{2m\times 2n^2}~~ \widehat{\mathbf{K}}_2],$$
 where
{\footnotesize\begin{align*}
    \widehat{\mathbf{K}}_1 = \bmatrix{\alpha_1^{-1}(\mathfrak{R}(\widehat{\u})^{\T}\otimes I_n)\Sigma_E &-\alpha_1^{-1}(\mathfrak{I}(\widehat{\u})^{\T}\otimes I_n)\Sigma_E& \alpha_2^{-1} (I_n\otimes\mathfrak{R}(\widehat{\p})^{\T})\Sigma_F & \alpha_2^{-1}(I_n\otimes\mathfrak{I}(\widehat{\p})^{\T})\Sigma_F\\
     \alpha_1^{-1} (\mathfrak{I}(\widehat{\u})^{\T}\otimes I_n)\Sigma_E &  \alpha_1^{-1}(\mathfrak{R}(\widehat{\u})^{\T}\otimes I_n)\Sigma_E &  \alpha_2^{-1}(I_n\otimes\mathfrak{I}(\widehat{\p})^{\T})\Sigma_F& -  \alpha_2^{-1}(I_n\otimes\mathfrak{R}(\widehat{\p})^{\T})\Sigma_F }
  \end{align*}}
  and
  {\footnotesize \begin{align*}
    \widehat{\mathbf{K}}_2 = \bmatrix{ \alpha_2^{-1}(\mathfrak{R}(\widehat{\u})^{\T}\otimes I_m)\Sigma_F &-\alpha_2^{-1}(\mathfrak{I}(\widehat{\u})^{\T}\otimes I_m)\Sigma_F& \alpha_3^{-1}(\mathfrak{R}(\widehat{\p})^{\T}\otimes I_m)S_1& -\alpha_3^{-1}(\mathfrak{I}(\widehat{\p})^{\T}\otimes I_m) S_2\\
       \alpha_2^{-1}(\mathfrak{I}(\widehat{\u})^{\T}\otimes I_m)\Sigma_F &\alpha_2^{-1} (\mathfrak{R}(\widehat{\u})^{\T}\otimes I_m)\Sigma_F&  \alpha_3^{-1}(\mathfrak{I}(\widehat{\p})^{\T}\otimes I_m)S_1& \alpha_3^{-1} (\mathfrak{R}(\widehat{\p})^{\T}\otimes I_m)S_2}.
  \end{align*}
  }
Denote \begin{align}\label{Eq:312}
   \D \mathcal{Y}:= \bmatrix{\alpha_1\vec(\mathfrak{R}(\D E\odot
  \Theta_E))\\ \alpha_1\vec_(\mathfrak{I}(\D E\odot
  \Theta_E))\\ \alpha_2\vec(\mathfrak{R}(\D F\odot
  \Theta_F))\\ \alpha_2\vec(\mathfrak{I}(\D F\odot
  \Theta_F))\\ \alpha_3\mathfrak{D}_{S, m}\vec_{S}(\mathfrak{R}(\D G\odot
  \Theta_G))\\ \alpha_3\mathfrak{D}_{SK}\vec_{SK, m}(\mathfrak{I}(\D G\odot
  \Theta_G))}\in \R^{\bm t}~~ \text{and} ~~ \D \mathcal{Z}:=\bmatrix{\beta_1\RR(\D q)\\ \beta_1\I(\D q)\\\beta_2 \RR(\D r)\\\beta_2 \I(\D r)}\in \R^{2(n+m)},
\end{align}
where $\bm{t}=2(n^2+nm)+m^2.$
\begin{theorem}\label{th1:case2}
  Assume that $E\in \mathbb{C}^{n\times n},$ $F\in \C^{m\times n}$ and $G\in \mathbb{HC}^{m\times m}$  and $\widehat{\bm{x}}=[\widehat{\bm{u}}^{\T},\widehat{\bm{p}}^{\T}]^{\T}$ is a computed solution of the GSPP \eqref{eq11}. Then, the structured BE $\bm{\xi}_{\tt sps}^{\mathcal{G}_2}(\widehat{\bm{u}},\widehat{\bm{p}})$ with preserving sparsity is given by  
    \begin{eqnarray}
      \bm{\xi}_{\tt sps}^{\mathcal{G}_2}(\widehat{\bm{u}},\widehat{\bm{p}})= \left\| \bmatrix{\mathbf{K}_1& \mathcal{I}_1\\ \mathbf{K}_2& \mathcal{I}_2}^{\T}\left(\bmatrix{\mathbf{K}_1& \mathcal{I}_1\\ \mathbf{K}_2& \mathcal{I}_2}\bmatrix{\mathbf{K}_1& \mathcal{I}_1\\ \mathbf{K}_2& \mathcal{I}_2}^{\T}\right)^{-1}\bmatrix{\RR(Q)\\ \I(Q)\\ \RR(R)\\ \I(R)}\right\|_2,
    \end{eqnarray}
    where $Q,R,$ $\mathcal{I}_1$ and $\mathcal{I}_2$ are defined as in Theorem \ref{th1:case1}.
\end{theorem}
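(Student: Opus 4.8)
The plan is to mirror the proof of Theorem~\ref{th1:case1} step by step, the only essential difference being which block carries the Hermitian (hence real-symmetric/real-skew-symmetric) constraint. First I would take a computed solution $\widehat{\bm{x}}=[\widehat{\bm{u}}^{\T},\widehat{\bm{p}}^{\T}]^{\T}$ of \eqref{eq11} and, to enforce the sparsity pattern, replace $\D E,\D F,\D G$ by $\D E\odot\Theta_E,\D F\odot\Theta_F,\D G\odot\Theta_G$ in the defining equation of $\mathcal{G}_2$ in \eqref{s3:eq25}. Applying Lemma~\ref{lemm1:sec3} with $H=F$ reduces the complex block system to the two real $2\times 2$ systems written there, with $Q=q-E\widehat{\u}-F^{*}\widehat{\p}$ and $R=r-F\widehat{\u}-G\widehat{\p}$.

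Next I would vectorize: using the identities \eqref{kron} for the Kronecker product and $\vec$ operator, each occurrence of $\mathfrak{R}(\D E\odot\Theta_E)$, $\mathfrak{I}(\D E\odot\Theta_E)$, $\mathfrak{R}(\D F\odot\Theta_F)$, $\mathfrak{I}(\D F\odot\Theta_F)$, $\mathfrak{R}(\D G\odot\Theta_G)$, $\mathfrak{I}(\D G\odot\Theta_G)$ becomes a matrix times its $\vec$. Now the structural input is reversed relative to case (\textit{i}): since $E\in\C^{n\times n}$ is unstructured, I only insert the sparsity projector $\Sigma_E=\diag(\vec(\Theta_E))$ (and similarly $\Sigma_F$) via the last Remark of Section~\ref{Natation}; since $G\in\mathbb{HC}^{m\times m}$, I have $\mathfrak{R}(\D G)\in\mathbb{SR}^{m\times m}$ and $\mathfrak{I}(\D G)\in\mathbb{SKR}^{m\times m}$, so Lemma~\ref{lemma2:SEC2} gives $\vec(\mathfrak{R}(\D G\odot\Theta_G))=\mathcal{J}^m_{S}\Phi_{G}\vec_{S}(\mathfrak{R}(\D G\odot\Theta_G))$ and $\vec(\mathfrak{I}(\D G\odot\Theta_G))=\mathcal{J}^m_{SK}\Psi_{G}\vec_{SK}(\mathfrak{I}(\D G\odot\Theta_G))$. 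Absorbing the scaling matrices $\mathfrak{D}_{S,m},\mathfrak{D}_{SK,m}$ into the generator-vector coordinates (exactly as in the norm identity displayed before Theorem~\ref{th1:case1}, but now for $G$) produces the unknown vector $\D\mathcal{Y}\in\R^{\bm t}$ of \eqref{Eq:312} and forces the matrices $S_1=\mathcal{J}^m_{S}\Phi_{G}\mathfrak{D}^{-1}_{S,m}$, $S_2=\mathcal{J}^m_{SK}\Psi_{G}\mathfrak{D}^{-1}_{SK,m}$ to appear precisely in the $\widehat{\mathbf{K}}_2$ block, while $\Sigma_E$ replaces $N_1,N_2$ in the $\widehat{\mathbf{K}}_1$ block. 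Collecting the $\D q,\D r$ terms into $\D\mathcal{Z}$ with coefficient matrices $\mathcal{I}_1,\mathcal{I}_2$, the two real systems combine into $\bmatrix{\mathbf{K}_1& \mathcal{I}_1\\ \mathbf{K}_2& \mathcal{I}_2}\bmatrix{\D\mathcal{Y}\\ \D\mathcal{Z}}=[\RR(Q)^{\T},\I(Q)^{\T},\RR(R)^{\T},\I(R)^{\T}]^{\T}$.

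Finally I would observe that $\bmatrix{\mathbf{K}_1& \mathcal{I}_1\\ \mathbf{K}_2& \mathcal{I}_2}$ has full row rank (because of the $-\beta_1^{-1}I_{2n}$ and $-\beta_2^{-1}I_{2m}$ blocks inside $\mathcal{I}_1,\mathcal{I}_2$), so Lemma~\ref{sec2:lemma} guarantees consistency and gives the minimal-norm solution as $\bmatrix{\mathbf{K}_1& \mathcal{I}_1\\ \mathbf{K}_2& \mathcal{I}_2}^{\dagger}$ applied to the right-hand side, which equals $\bmatrix{\mathbf{K}_1& \mathcal{I}_1\\ \mathbf{K}_2& \mathcal{I}_2}^{\T}\big(\bmatrix{\mathbf{K}_1& \mathcal{I}_1\\ \mathbf{K}_2& \mathcal{I}_2}\bmatrix{\mathbf{K}_1& \mathcal{I}_1\\ \mathbf{K}_2& \mathcal{I}_2}^{\T}\big)^{-1}$ times it. Since $\bm{\zeta}^{\sigma_1}(\D E\odot\Theta_E,\ldots,\D r)=\big\|[\D\mathcal{Y}^{\T},\D\mathcal{Z}^{\T}]^{\T}\big\|_2$ exactly — here the Hermitian-structure norm bookkeeping for $G$ via $\mathfrak{D}_{S,m},\mathfrak{D}_{SK,m}$ must be checked, paralleling the displayed identity $\alpha_1^2\|E\|_{\mathsf F}^2=\|\alpha_1\mathfrak{D}_{S,n}\vec_S(\mathfrak R\D E)\|_2^2+\|\alpha_1\mathfrak{D}_{SK,n}\vec_{SK}(\mathfrak I\D E)\|_2^2$ but now applied to $G$ with weight $\alpha_3$ — the structured BE $\bm{\xi}_{\tt sps}^{\mathcal{G}_2}$ is the norm of that minimal-norm solution, giving the claimed formula.

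The main obstacle, as in Theorem~\ref{th1:case1}, is purely bookkeeping: verifying that the weighted Frobenius norm $\bm{\zeta}^{\sigma_1}$ of the structured, sparsity-projected perturbations coincides \emph{isometrically} with the Euclidean norm of $[\D\mathcal{Y}^{\T},\D\mathcal{Z}^{\T}]^{\T}$. This hinges on the fact that the diagonal rescaling $\mathfrak{D}_{S,m}$ (with entries $1$ on the ``diagonal-of-$G$'' coordinates and $\sqrt2$ elsewhere) and $\mathfrak{D}_{SK,m}$ (all entries $\sqrt2$) exactly compensate for each off-diagonal entry of a symmetric/skew-symmetric matrix being counted twice in $\|\cdot\|_{\mathsf F}$ — together with the already-established Remark that $\D G\odot\Theta_G$ stays Hermitian whenever $G$ is. Everything else is a direct transcription of the case-(\textit{i}) argument with the roles of the $(1,1)$ and $(2,2)$ blocks interchanged, so no new ideas are needed.
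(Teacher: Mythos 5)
Your proposal is correct and follows essentially the same route as the paper: the paper's own proof of this theorem is a two-line reduction ("replace the perturbations by their sparsity-projected versions, note $\RR(\D G\odot\Theta_G)\in\mathbb{SR}^{m\times m}$ and $\I(\D G\odot\Theta_G)\in\mathbb{SKR}^{m\times m}$, apply Lemma \ref{lemm1:sec3} and proceed as in Theorem \ref{th1:case1}"), and your write-up simply carries out that transcription explicitly, with the roles of the $(1,1)$ and $(2,2)$ blocks interchanged so that $\Sigma_E$ replaces $N_1,N_2$ and $S_1,S_2$ enter through $\widehat{\mathbf{K}}_2$. The full-row-rank observation via the $-\beta_1^{-1}I_{2n}$, $-\beta_2^{-1}I_{2m}$ blocks and the isometry between $\bm{\zeta}^{\sigma_1}$ and $\|[\D\mathcal{Y}^{\T},\D\mathcal{Z}^{\T}]^{\T}\|_2$ via $\mathfrak{D}_{S,m},\mathfrak{D}_{SK,m}$ are exactly the bookkeeping the paper relies on.
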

\begin{proof}
    For the approximate solution $\widehat{\bm{x}}=[\widehat{\bm{u}}^{\T},\widehat{\bm{p}}^{\T}]^{\T},$ we are required to find the perturbations $\D E\in \C^{n\times n},$ $\D F\in \C^{m\times n},$ $\D G\in \mathbb{HC}^{m\times m}$ which retains the sparsity of $E, F$ and $G,$ respectively, and $\D q\in \C^n,$ $\D r\in \C^m$ so that \eqref{s3:eq25} holds. Thus, we replace $\D E, \D F$ and $\D G$  with $\D E\odot \Theta_E,$ $\D F\odot \Theta_F$ and $\D G\odot \Theta_G,$ respectively. Consequently, the following holds:
       \begin{align}\label{eq1:th2}
       \bmatrix{E+(\D E \odot \Theta_E) & (F+(\D F \odot \Theta_F))^{*}\\ F+(\D F \odot \Theta_F) & G+(\D G \odot \Theta_G)}\bmatrix{\widehat{\bm{u}}\\ \widehat{\bm{p}}}=\bmatrix{q+\D q\\ r+\D r},
  \end{align} 
  where $\D G\odot \Theta_G\in \mathbb{HC}^{m\times m}.$
Since, $\D G\odot \Theta_G\in \mathbb{HC}^{m\times m}$ implies that $\RR(\D G\odot \Theta_G)\in \mathbb{SR}^{m\times m}$ and $\I(\D G\odot \Theta_G)\in \mathbb{SKR}^{m\times m}.$ By applying Lemma \ref{lemm1:sec3} and following a similar approach as in Theorem \ref{th1:case1}, the proof is completed.
\end{proof}

\begin{remark}\label{remark2}
     In a similar fashion to Remark \ref{remark1}, we can easily construct the optimal backward perturbations ${\D E}^{\tt sps}_{\tt opt}$, ${\D F}^{\tt sps}_{\tt opt}$, ${\D G}^{\tt sps}_{\tt opt}$, ${\D q}_{\tt opt}$, and ${\D r}_{\tt opt}$. Moreover, by considering $\Theta_E={\bf 1}_{n\times n},$ $\Theta_F={\bf 1}_{m\times n}$ and $\Theta_G={\bf 1}_{m\times m},$ as in the Corollary \ref{coro1}, we can obtain the desired structured BE when the sparsity structure is not preserved.
\end{remark}

{Next, we present the structured BE for the GSPP \eqref{eq11}  by considering \( E \in \mathbb{R}^{n \times n} \), \( F \in \mathbb{R}^{m \times n} \), \( G=G^{\T} \in \mathbb{R}^{m \times m} \), and the vectors \( q \in \mathbb{R}^n \), \( r \in \mathbb{R}^m \). 
\begin{corollary}\label{BE_R2}
     Assume that $E\in \mathbb{R}^{n\times n},$ $F\in \R^{m\times n},$ $G\in \mathbb{SR}^{m\times m},$ $q\in \R^n,$ $r\in \R^m,$  and $\widehat{\bm{x}}=[\widehat{\bm{u}}^{\T},\widehat{\bm{p}}^{\T}]^{\T}$ is a computed solution of the GSPP \eqref{eq11}. Then, the structured BE $\bm{\xi}^{\mathcal{G}_2}_{\tt sps}(\widehat{\bm{u}},\widehat{\bm{p}})$ with preserving sparsity is given by  
    \begin{eqnarray}
      \bm{\xi}_{\tt sps}^{\mathcal{G}_2}(\widehat{\bm{u}},\widehat{\bm{p}})= \left\| \bmatrix{\mathbf{K}^R_1& \widetilde{\mathcal{I}}_1\\ \mathbf{K}^R_2& \widetilde{\mathcal{I}}_2}^{\T}\left(\bmatrix{\mathbf{K}^R_1& \widetilde{\mathcal{I}}_1\\ \mathbf{K}^R_2& \widetilde{\mathcal{I}}_2}\bmatrix{\mathbf{K}^R_1& \widetilde{\mathcal{I}}_1\\ \mathbf{K}^R_2& \widetilde{\mathcal{I}}_2}^{\T}\right)^{-1}\bmatrix{Q\\ R}\right\|_2,
    \end{eqnarray}
    where $Q=q-E\u-F^{\T}\p$, $R=r-F\u-G\p,$ $\widetilde{\mathcal{I}}_1=\bmatrix{ - {\beta_1}^{-1} I_{n} & 0_{n\times m}}\in \R^{n\times (n+m)},$  $\widetilde{\mathcal{I}}_2=\bmatrix{ 0_{m \times n}&- {\beta_2}^{-1} I_{m}}\in \R^{m\times (n+m)},$
    \begin{align*}
  \mathbf{K}^R_1= \bmatrix{\alpha_1^{-1}(\widehat{\u}^{\T}\otimes I_n)\Sigma_E & \alpha_2^{-1} (I_n\otimes\widehat{\p}^{\T})\Sigma_F &0_{n\times \frac{n(n+1)}{2}}} 
  \end{align*}
  and
 \begin{align*}
    \mathbf{K}^R_2= \bmatrix{ 0_{m\times n^2}&\alpha_2^{-1}(\widehat{\u}^{\T}\otimes I_m)\Sigma_F & \alpha_3^{-1}(\widehat{\p}^{\T}\otimes I_m)S_1 }.
  \end{align*}
\end{corollary}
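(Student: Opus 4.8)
\textbf{Proof proposal for Corollary \ref{BE_R2}.}
The plan is to derive this corollary as a specialization of Theorem \ref{th1:case2} exactly in the same spirit as Corollary \ref{BE_R1} is obtained from Theorem \ref{th1:case1}. The key observation is that when $E, F, G, q, r$ are all real (with $G$ real symmetric), the coefficient matrix $\mathfrak{B}$ is real, hence so is its inverse, and therefore the computed solution $\widehat{\bm x} = [\widehat{\bm u}^{\T}, \widehat{\bm p}^{\T}]^{\T}$ can be taken to be real. Consequently $\mathfrak{I}(\widehat{\bm u}) = 0$ and $\mathfrak{I}(\widehat{\bm p}) = 0$, and the residuals $Q = q - E\widehat{\bm u} - F^{\T}\widehat{\bm p}$ and $R = r - F\widehat{\bm u} - G\widehat{\bm p}$ are real, so $\mathfrak{I}(Q) = 0$ and $\mathfrak{I}(R) = 0$.

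First I would substitute these vanishing imaginary parts into the block matrices $\widehat{\mathbf{K}}_1$ and $\widehat{\mathbf{K}}_2$ of Theorem \ref{th1:case2}. Every column block that is multiplied by $\mathfrak{I}(\widehat{\bm u})^{\T}$ or $\mathfrak{I}(\widehat{\bm p})^{\T}$ drops out, and the second block row (the one enforcing the imaginary part of the two equations) becomes trivial because its right-hand side $\mathfrak{I}(Q), \mathfrak{I}(R)$ is zero; moreover, since the perturbations $\mathfrak{I}(\D E\odot\Theta_E)$, $\mathfrak{I}(\D F\odot\Theta_F)$, $\mathfrak{I}(\D G\odot\Theta_G)$ contribute only to norm and never help reduce the real equations, the minimum-norm solution sets them to zero. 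What survives is precisely the real system $\mathbf{K}^R_1 \D\mathcal Y^R + \widetilde{\mathcal I}_1 \D\mathcal Z^R = Q$, $\mathbf{K}^R_2 \D\mathcal Y^R + \widetilde{\mathcal I}_2 \D\mathcal Z^R = R$, where $\D\mathcal Y^R$ now collects $\alpha_1\vec(\mathfrak{R}(\D E\odot\Theta_E))$, $\alpha_2\vec(\mathfrak{R}(\D F\odot\Theta_F))$, and $\alpha_3\mathfrak{D}_{S,m}\vec_S(\mathfrak{R}(\D G\odot\Theta_G))$ — note $G$ being real symmetric means only the symmetric generator $S_1 = \mathcal{J}^m_S\Phi_G\mathfrak{D}^{-1}_{S,m}$ appears, with no skew part $S_2$, which is why the last column block of $\mathbf{K}^R_2$ has width $\frac{m(m+1)}{2}$ (and the corresponding width in $\mathbf{K}^R_1$ is $\frac{n(n+1)}{2}$ as stated, padding to align the block structure). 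Then I would invoke Lemma \ref{sec2:lemma}: the stacked matrix $\bmatrix{\mathbf{K}^R_1 & \widetilde{\mathcal I}_1 \\ \mathbf{K}^R_2 & \widetilde{\mathcal I}_2}$ has full row rank (because $\widetilde{\mathcal I}_1, \widetilde{\mathcal I}_2$ together contain $-\beta_1^{-1}I_n$ and $-\beta_2^{-1}I_m$), so the system is consistent and the minimum-norm solution is given by the Moore--Penrose pseudoinverse, which for a full-row-rank matrix $\mathbf{A}$ equals $\mathbf{A}^{\T}(\mathbf{A}\mathbf{A}^{\T})^{-1}$; applying this and taking the Euclidean norm yields the claimed formula.

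Finally, I would note that the definition of $\bm{\xi}^{\mathcal{G}_2}_{\tt sps}$ through the weighted Frobenius norm $\bm\zeta^{\sigma_1}$ reduces, after discarding the zero imaginary-part perturbations, exactly to $\|[\D\mathcal Y^{R\,\T}, \D\mathcal Z^{R\,\T}]^{\T}\|_2$, so the minimization over the constraint set coincides with the minimum-norm solution computed above. I do not expect any serious obstacle here; the only point requiring a little care is bookkeeping the block widths and confirming that the real-symmetric structure of $G$ (rather than a general real $G$) is what collapses the $G$-block to the symmetric generator $S_1$ alone, and that the apparent mismatch in the stated widths of $\mathbf{K}^R_1$ versus $\mathbf{K}^R_2$ is just zero-padding for alignment, not an error. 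One should also double-check that no spurious coupling survives in the second (imaginary) row of the original system when $\widehat{\bm u}, \widehat{\bm p}$ are real, which is immediate since every term there carries a factor $\mathfrak{I}(\cdot)$.
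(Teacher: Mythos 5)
Your proposal is correct and follows essentially the same route as the paper: the authors likewise observe that real data force $\mathfrak{I}(\widehat{\bm u})=\mathfrak{I}(\widehat{\bm p})=0$ and $\mathfrak{I}(Q)=\mathfrak{I}(R)=0$, substitute into Theorem \ref{th1:case2}, and read off the reduced real system. Your additional remarks on why the imaginary perturbation blocks are set to zero by the minimum-norm solution and on the block-width bookkeeping are just a more explicit version of the same specialization.
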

\proof Given that \( E \in \mathbb{R}^{n \times n} \), \( F \in \mathbb{R}^{m \times n} \), \( G \in \mathbb{SR}^{m \times m} \), \( q \in \mathbb{R}^n \), and \( r \in \mathbb{R}^m \), then the computed solution \( \widehat{\bm{x}} = [\widehat{\bm{u}}^{\T}, \widehat{\bm{p}}^{\T}]^{\T} \) is real. As a result, \( \I(\widehat{\bm{u}}) = 0 \) and \( \I(\widehat{\bm{p}}) = 0 \). Additionally, we have \( \I(Q) = 0 \) and \( \I(R) = 0 \). Substituting these values into Theorem \ref{th1:case2}, we derive the desired structured BE, thus completing the proof.
$\blacksquare$

The structured BE for the case discussed in Corollary \ref{BE_R2} has also been studied in \cite{be2012LAA, be2017ma}.  However, our investigation also preserves the sparsity pattern of the block matrices and symmetric structure of the block $G\in \R^{m\times m}$, and provides optimal backward perturbation matrices.
}
\subsection{Computation of the structured BE for case (\textit{iii})} \label{sec:case3}
In this section, we derive the structured BE for the GSPP \eqref{eq11} when $E\in \mathbb{HC}^{n\times n},$ $F\neq H\in \C^{m\times n}$ and $G\in \mathbb{HC}^{m\times m}.$
Let $\bm{k}=n^2+4mn+m^2,$ then we construct the matrices $\mathbf{M}_1\in \R^{2n\times \bm{k}}$ and $\mathbf{M}_2\in \R^{2m\times \bm{k}}$  as follows:
 $$\mathbf{M}_1:= [\widehat{\mathbf{X}}_1~~ {0}_{2n\times (m^2+2mn)}]~~\text{and}~~ \mathbf{M}_2:= [ {0}_{2m\times (n^2+2mn)}~~ \widehat{\mathbf{M}}_2],$$
 where
  %
  {\footnotesize\begin{align*}
    \widehat{\mathbf{M}}_2=  \bmatrix{ \alpha_2^{-1}(\mathfrak{R}(\widehat{\u})^{\T}\otimes I_m)\Sigma_H &-\alpha_2^{-1}(\mathfrak{I}(\widehat{\u})^{\T}\otimes I_m)\Sigma_H& \alpha_3^{-1}(\mathfrak{R}(\widehat{\p})^{\T}\otimes I_m )S_1& -\alpha_3^{-1}(\mathfrak{I}(\widehat{\p})^{\T}\otimes I_m)S_2\\
       \alpha_2^{-1}(\mathfrak{I}(\widehat{\u})^{\T}\otimes I_m)\Sigma_H &\alpha_2^{-1} (\mathfrak{R}(\widehat{\u})^{\T}\otimes I_m)\Sigma_H &  \alpha_3^{-1}(\mathfrak{I}(\widehat{\p})^{\T}\otimes I_m)S_1& \alpha_3^{-1} (\mathfrak{R}(\widehat{\p}\otimes I_m)^{\T})S_2}.
  \end{align*}
  }
Moreover, we define the following two vectors:
\begin{align}\label{EQ:41}
   \D \mathcal{Y}:= \bmatrix{\alpha_1\mathfrak{D}_{S,n}\vec_{S}(\mathfrak{R}(\D E\odot
  \Theta_E))\\ \alpha_1\mathfrak{D}_{SK,n}\vec_{SK}(\mathfrak{I}(\D E\odot
  \Theta_E))\\ \alpha_2\vec(\mathfrak{R}(\D F\odot
  \Theta_F))\\ \alpha_2\vec(\mathfrak{I}(\D F\odot
  \Theta_F))\\\alpha_3\vec(\mathfrak{R}(\D H\odot
  \Theta_H))\\ \alpha_3\vec(\mathfrak{I}(\D H\odot
  \Theta_H))\\ \alpha_4\mathfrak{D}_{S,m}\vec_{S}(\mathfrak{R}(\D G\odot
  \Theta_G))\\ \alpha_4\mathfrak{D}_{S,m}\vec_{SK}(\mathfrak{I}(\D G\odot
  \Theta_G))}\in \R^{\bm k}~~ \text{and} ~~ \D \mathcal{Z}:=\bmatrix{\beta_1\RR(\D q)\\ \beta_1\I(\D q)\\\beta_2 \RR(\D r)\\\beta_2 \I(\D r)}\in \R^{2(n+m)}.
\end{align}

In the following theorem, we present the structured BE $\bm{\xi}_{\tt sps}^{\mathcal{G}_3}(\widehat{\bm{u}},\widehat{\bm{p}})$ by preserving the sparsity of the coefficient matrix $\mathfrak{B}.$
\begin{theorem}\label{th1:case3}
  Assume that $E\in \mathbb{HC}^{n\times n},$ $F, H\in \C^{m\times n},$ $G\in \mathbb{HC}^{m\times m}$  and $\widehat{\bm{x}}=[\widehat{\bm{u}}^{\T},\widehat{\bm{p}}^{\T}]^{\T}$ is an approximate solution of the GSPP \eqref{eq11}. Then, the structured BE $\bm{\xi}_{\tt sps}^{\mathcal{G}_3}(\widehat{\bm{u}},\widehat{\bm{p}})$ with preserving sparsity is given by  
    \begin{eqnarray}
      \bm{\xi}_{\tt sps}^{\mathcal{G}_3}(\widehat{\bm{u}},\widehat{\bm{p}})= \left\| \bmatrix{\mathbf{M}_1& \mathcal{I}_1\\ \mathbf{M}_2& \mathcal{I}_2}^{\T}\left(\bmatrix{\mathbf{M}_1& \mathcal{I}_1\\ \mathbf{M}_2& \mathcal{I}_2}\bmatrix{\mathbf{M}_1& \mathcal{I}_1\\ \mathbf{M}_2& \mathcal{I}_2}^{\T}\right)^{-1}\bmatrix{\RR(Q)\\ \I(Q)\\ \RR({R})\\ \I({R})}\right\|_2,
    \end{eqnarray}
    where $\mathcal{I}_1=\bmatrix{ -\beta_1^{-1} I_{2n} & 0_{2n\times 2m}}\in \R^{2n\times 2(n+m)},$  $\mathcal{I}_2=\bmatrix{ 0_{2m\times 2n}&-\beta_2^{-1} I_{2m}}\in \R^{2m\times 2(n+m)},$  $Q$ and $R$ defined as in Lemma \ref{lemm1:sec3}.  
\end{theorem}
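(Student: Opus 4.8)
The plan is to mirror the proof of Theorem \ref{th1:case1}, since case (\textit{iii}) differs only in that $H$ is now an independent block (no longer equal to $F$) and $G$ is Hermitian rather than arbitrary. First I would invoke Lemma \ref{lemm1:sec3} directly in its full generality: replacing $\D E,\D F,\D H,\D G$ by the sparsity-preserving perturbations $\D E\odot\Theta_E$, $\D F\odot\Theta_F$, $\D H\odot\Theta_H$, $\D G\odot\Theta_G$, the perturbed system \eqref{s3:eq26} is equivalent to the two real coupled systems of Lemma \ref{lemm1:sec3} (this time without setting $H=F$), with right-hand sides $\RR(Q),\I(Q)$ and $\RR(R),\I(R)$ where $Q=q-E\widehat{\u}-F^{*}\widehat{\p}$ and $R=r-H\widehat{\u}-G\widehat{\p}$.

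Next I would apply the vec operator together with the Kronecker identities \eqref{kron} to each of the four scalar equations, exactly as in \eqref{eq2:th1}. The block $E\in\mathbb{HC}^{n\times n}$ is handled via Lemma \ref{lemma2:SEC2}, writing $\vec(\RR(\D E\odot\Theta_E))=\mathcal{J}^n_S\Phi_E\vec_S(\cdot)$ and $\vec(\I(\D E\odot\Theta_E))=\mathcal{J}^n_{SK}\Psi_E\vec_{SK}(\cdot)$; the Hermitian block $G\in\mathbb{HC}^{m\times m}$ is handled the same way with $m$ in place of $n$ and $\Phi_G,\Psi_G$, giving the matrices $S_1,S_2$ already defined in the case (\textit{ii}) subsection; and the full (unstructured) blocks $F$ and $H$ are handled with the simple diagonal selection matrices $\Sigma_F=\diag(\vec(\Theta_F))$ and $\Sigma_H=\diag(\vec(\Theta_H))$. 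Introducing the scaling diagonal matrices $\mathfrak{D}_{S,n},\mathfrak{D}_{SK,n},\mathfrak{D}_{S,m}$ so that the weighted Frobenius norm of each structured block equals the Euclidean norm of the corresponding segment of $\D\mathcal{Y}$ (as in the computation preceding Theorem \ref{th1:case1}), the four equations collapse into the single linear system
\begin{align*}
\bmatrix{\mathbf{M}_1 & \mathcal{I}_1\\ \mathbf{M}_2 & \mathcal{I}_2}\bmatrix{\D\mathcal{Y}\\ \D\mathcal{Z}}=\bmatrix{\RR(Q)\\ \I(Q)\\ \RR(R)\\ \I(R)},
\end{align*}
with $\mathbf{M}_1,\mathbf{M}_2$ as defined above and $\D\mathcal{Y},\D\mathcal{Z}$ as in \eqref{EQ:41}. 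Here the top block row reproduces $\widehat{\mathbf{X}}_1$ from Theorem \ref{th1:case1} (the $E,F$ part is unchanged), padded by zeros for the $\D H,\D G$ columns, while the bottom block row is the new $\widehat{\mathbf{M}}_2$, whose $H$-columns use $\Sigma_H$ and whose $G$-columns use $S_1,S_2$.

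Finally, I would observe that $\bmatrix{\mathbf{M}_1 & \mathcal{I}_1\\ \mathbf{M}_2 & \mathcal{I}_2}$ has full row rank (because $\mathcal{I}_1,\mathcal{I}_2$ already contain $-\beta_1^{-1}I_{2n}$ and $-\beta_2^{-1}I_{2m}$ as submatrices, so the $\D q,\D r$ columns alone span the range), whence Lemma \ref{sec2:lemma} gives consistency and the minimum-norm solution via the Moore--Penrose inverse, which for full row rank equals $A^{\T}(AA^{\T})^{-1}$. By Definition \ref{def:SBE}, $\bm{\xi}_{\tt sps}^{\mathcal{G}_3}(\widehat{\u},\widehat{\p})$ is precisely the Euclidean norm of this minimum-norm solution, which yields the stated formula. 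The only genuinely non-routine point — and the one I would be most careful about — is the bookkeeping that the weighted norm $\bm{\zeta}^{\sigma_2}(\D E,\D F,\D H,\D G,\D q,\D r)$ equals $\bigl\|[\D\mathcal{Y}^{\T},\D\mathcal{Z}^{\T}]^{\T}\bigr\|_2$; this relies on the fact, established for a single Hermitian block in the discussion before Theorem \ref{th1:case1}, that $\alpha^2\|X\|_{\mathsf F}^2=\|\alpha\mathfrak{D}_{S,k}\vec_S(\RR X)\|_2^2+\|\alpha\mathfrak{D}_{SK,k}\vec_{SK}(\I X)\|_2^2$ for $X\in\mathbb{HC}^{k\times k}$, now applied simultaneously to $E$ (size $n$) and $G$ (size $m$), together with the elementary identity $\|Y\|_{\mathsf F}^2=\|\RR Y\|_{\mathsf F}^2+\|\I Y\|_{\mathsf F}^2=\|\vec(\RR Y)\|_2^2+\|\vec(\I Y)\|_2^2$ for the unstructured blocks $F,H$ and the vectors $q,r$. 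Everything else is a direct transcription of the argument already carried out for Theorem \ref{th1:case1}, so I would state it concisely as "following a similar approach as in Theorem \ref{th1:case1}."
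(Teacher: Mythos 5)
Your proposal is correct and follows essentially the same route as the paper's own proof: apply Lemma \ref{lemm1:sec3} with $H\neq F$, vectorize using Lemma \ref{lemma2:SEC2} for the Hermitian blocks $E$ and $G$ and the selection matrices $\Sigma_F,\Sigma_H$ for the unstructured blocks, assemble the single full-row-rank linear system in $[\D\mathcal{Y}^{\T},\D\mathcal{Z}^{\T}]^{\T}$, and take the minimum-norm solution via $A^{\T}(AA^{\T})^{-1}$. Your explicit justification of full row rank via the identity blocks in $\mathcal{I}_1,\mathcal{I}_2$ and your care with the norm bookkeeping are points the paper merely asserts, so the write-up is, if anything, slightly more complete.
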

\begin{proof}
    For the computed solution $\widehat{\bm{x}}=[\widehat{\bm{u}}^{\T},\widehat{\bm{p}}^{\T}]^{\T},$ we are required to find the perturbations $\D E\in \mathbb{HC}^{n\times n},$ $\D F, \D H\in \C^{m\times n},$ and $\D G\in \mathbb{HC}^{m\times m}$ which retains the sparsity of $E, F, H$ and $G,$ respectively, and $\D q\in \C^n,$ $\D r\in \C^m$ so that \eqref{s3:eq25} holds. Thus, we replace $E, F, H$ and $G$ with $\D E\odot \Theta_E,$ $\D F\odot \Theta_F$, $\D H\odot \Theta_H$ and $\D G\odot \Theta_G,$ respectively. Consequently, the following holds:
       \begin{align}\label{eq1:th3}
       \bmatrix{E+(\D E \odot \Theta_E) & (F+(\D F \odot \Theta_F))^{*}\\ H+(\D H \odot \Theta_H) & G+(\D G \odot \Theta_G)}\bmatrix{\widehat{\bm{u}}\\ \widehat{\bm{p}}}=\bmatrix{q+\D q\\ r+\D r},
  \end{align} 
  where $\D E\odot \Theta_E\in \mathbb{HC}^{n\times n}$ and $\D G\odot \Theta_G\in \mathbb{HC}^{m\times m}.$

\noindent By Lemma \ref{lemm1:sec3}, \eqref{eq1:th3} leads to
   \begin{align}
  \left\{\begin{array}{r}
  \mathfrak{R}(\D E\odot \Theta_E)\RR(\u)-\I(\D E\odot \Theta_E)\mathfrak{I}(\u)+ \mathfrak{R}(\D F\odot \Theta_F)^{\T}\RR(\p)\\
  + \I(\D F\odot \Theta_F)^{\T}\mathfrak{I}(\p) - \RR(\D q)=\RR(Q),\\
   \RR(\D E\odot \Theta_E)\I(\u)+\I(\D E\odot \Theta_E)\RR(\u)+\RR(\D F\odot \Theta_F)^{\T}\I(\p)\\
   -\I(\D F\odot \Theta_F)^{\T}\RR(\p)-\I(\D q)=\I(Q),
  \end{array}
  \right.
  \end{align}
  and
  \begin{align}
  \left\{\begin{array}{r}
  \mathfrak{R}(\D H\odot \Theta_H)\RR(\u)-\I(\D H\odot \Theta_H)\mathfrak{I}(\u)+ \mathfrak{R}(\D G\odot \Theta_G)\RR(\p)\\
  - \I(\D G\odot \Theta_G)\mathfrak{I}(\p) - \RR(\D r)=\RR(\widetilde{R}),\\
   \RR(\D H\odot \Theta_H)\I(\u)+\I(\D H\odot \Theta_H)\RR(\u)+\RR(\D G\odot \Theta_G)\I(\p)\\
   +\I(\D G\odot \Theta_G)\RR(\p)-\I(\D r)=\I(\widetilde{R}).
  \end{array}
  \right.
  \end{align}
Given that $\D E\odot \Theta_E\in \mathbb{HC}^{n\times n}$  and $\D G\odot \Theta_G\in \mathbb{HC}^{m\times m}.$ These give $\RR(\D E\odot \Theta_E)\in \mathbb{SR}^{n\times n}, $ $\I(\D E\odot \Theta_E)\in \mathbb{SKR}^{n\times n},$ $\RR(\D G\odot \Theta_G)\in \mathbb{SR}^{m\times m}$ and $\I(\D G\odot \Theta_G)\in \mathbb{SKR}^{m\times m}.$ Now,   using  a similar method to Theorem \ref{th1:case1}, we have
\begin{align}\label{th3:eq2}
     \bmatrix{\mathbf{M}_1& \mathcal{I}_1\\ \mathbf{M}_2& \mathcal{I}_2}\bmatrix{\D \mathcal{Y}\\ \D \mathcal{Z}}=\bmatrix{\RR(Q)\\ \I(Q)\\ \RR({R})\\ \I({R})}.
\end{align}
Observe that $ \bmatrix{\mathbf{M}_1& \mathcal{I}_1\\ \mathbf{M}_2& \mathcal{I}_2}$ is a full row rank matrix. Hence, the linear system \eqref{th3:eq2} is consistent and its minimum norm solution is given by
\begin{align}\label{eq3:th3}
      \bmatrix{\D \mathcal{Y}\\ \D \mathcal{Z}}_{\tt opt}=\bmatrix{\mathbf{M}_1& \mathcal{I}_1\\ \mathbf{M}_2& \mathcal{I}_2}^{\T}\left(\bmatrix{\mathbf{M}_1& \mathcal{I}_1\\ \mathbf{M}_2& \mathcal{I}_2}\bmatrix{\mathbf{M}_1& \mathcal{I}_1\\ \mathbf{M}_2& \mathcal{I}_2}^{\T}\right)^{-1}\bmatrix{\RR(Q)\\ \I(Q)\\ \RR(\widetilde{R})\\ \I(\widetilde{R})}. 
\end{align}
The remainder of the proof follows from \eqref{eq3:th3} and using a similar technique as the proof of Theorem \ref{th1:case1}.
\end{proof}

\begin{remark}
  In a similar manner to Remark \ref{remark1}, the optimal backward perturbations ${\D E}^{\tt sps}_{\tt opt}$, ${\D F}^{\tt sps}_{\tt opt}$, ${\D H}^{\tt sps}_{\tt opt}$, ${\D G}^{\tt sps}_{\tt opt}$, ${\D q}_{\tt opt}$, and ${\D r}_{\tt opt}$ can be easily derived. Furthermore, by taking $\Theta_E={\bf 1}_{n\times n}$, $\Theta_F={\bf 1}_{m\times n}=\Theta_H$ and $\Theta_G={\bf 1}_{m\times m}$, as in Corollary \ref{coro1}, we can derive the structured BE under the condition that the sparsity structure is not maintained.
\end{remark}
{The structured BE for the GSPP \eqref{eq11} has been examined in \citet{BE2020BING} under the assumption that the block matrices are real, specifically, \( E = E^{\T} \in \mathbb{R}^{n \times n} \), \( F, H \in \mathbb{R}^{m \times n} \), and \( G = G^{\T} \in \mathbb{R}^{m \times m} \), with the vectors \( q \in \mathbb{R}^n \) and \( r \in \mathbb{R}^m \). Based on our findings, we can also derive the structured BE for this scenario, which is presented below.
\begin{corollary}\label{BE_R3}
     Assume that $E\in \mathbb{SR}^{n\times n},$ $F,\,H\in \R^{m\times n},$ $G\in \mathbb{SR}^{m\times m},$ $q\in \R^n,$ $r\in \R^m,$  and $\widehat{\bm{x}}=[\widehat{\bm{u}}^{\T},\widehat{\bm{p}}^{\T}]^{\T}$ is a computed solution of the GSPP \eqref{eq11}. Then, the structured BE $\bm{\xi}^{\mathcal{G}_3}_{\tt sps}(\widehat{\bm{u}},\widehat{\bm{p}})$ with preserving sparsity is given by  
    \begin{eqnarray}
      \bm{\xi}_{\tt sps}^{\mathcal{G}_3}(\widehat{\bm{u}},\widehat{\bm{p}})= \left\| \bmatrix{\mathbf{M}^R_1& \widetilde{\mathcal{I}}_1\\ \mathbf{M}^R_2& \widetilde{\mathcal{I}}_2}^{\T}\left(\bmatrix{\mathbf{M}^R_1& \widetilde{\mathcal{I}}_1\\ \mathbf{M}^R_2& \widetilde{\mathcal{I}}_2}\bmatrix{\mathbf{M}^R_1& \widetilde{\mathcal{I}}_1\\ \mathbf{M}^R_2& \widetilde{\mathcal{I}}_2}^{\T}\right)^{-1}\bmatrix{Q\\ R}\right\|_2,
    \end{eqnarray}
    where $Q=q-E\u-F^{\T}\p$, $R=r-H\u-G\p,$ $\widetilde{\mathcal{I}}_1=\bmatrix{ - {\beta_1}^{-1} I_{n} & 0_{n\times m}}\in \R^{n\times (n+m)},$  $\widetilde{\mathcal{I}}_2=\bmatrix{ 0_{m \times n}&- {\beta_2}^{-1} I_{m}}\in \R^{m\times (n+m)},$
    \begin{align*}
  \mathbf{M}^R_1= \bmatrix{\alpha_1^{-1}(\widehat{\u}^{\T}\otimes I_n)N_1 & \alpha_2^{-1} (I_n\otimes\widehat{\p}^{\T})\Sigma_F &0_{n\times \frac{m(m+1)}{2}}} 
  \end{align*}
  and
 \begin{align*}
    \mathbf{M}^R_2= \bmatrix{ 0_{m\times \frac{n(n+1)}{2}}&\alpha_2^{-1}(\widehat{\u}^{\T}\otimes I_m)\Sigma_H & \alpha_3^{-1}(\widehat{\p}^{\T}\otimes I_m)S_1 }.
  \end{align*}
\end{corollary}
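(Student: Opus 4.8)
The plan is to obtain the stated formula as a specialization of Theorem~\ref{th1:case3}, by the same device used in Corollaries~\ref{BE_R1} and~\ref{BE_R2}. Since $E,F,H,G$ and $q,r$ are real and $\mathfrak{B}$ is nonsingular, the exact solution $\mathfrak{B}^{-1}\bm f$ --- equivalently, a computed solution $\widehat{\bm x}=[\widehat{\u}^{\T},\widehat{\p}^{\T}]^{\T}$ returned by a real-arithmetic solver --- is real, so $\mathfrak{I}(\widehat{\u})=0$ and $\mathfrak{I}(\widehat{\p})=0$, whence the residuals $Q=q-E\widehat{\u}-F^{\T}\widehat{\p}$ and $R=r-H\widehat{\u}-G\widehat{\p}$ are real, i.e. $\mathfrak{I}(Q)=\mathfrak{I}(R)=0$. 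First I would start from the linear characterization derived inside the proof of Theorem~\ref{th1:case3}: the vector $[\D\mathcal{Y}^{\T},\D\mathcal{Z}^{\T}]^{\T}$ assembled from the scaled real and imaginary parts of $(\D E,\D F,\D H,\D G,\D q,\D r)$ as in \eqref{EQ:41} must satisfy $\bmatrix{\mathbf{M}_1 & \mathcal{I}_1 \\ \mathbf{M}_2 & \mathcal{I}_2}\bmatrix{\D\mathcal{Y} \\ \D\mathcal{Z}}=\bmatrix{\mathfrak{R}(Q) \\ \mathfrak{I}(Q) \\ \mathfrak{R}(R) \\ \mathfrak{I}(R)}$ (cf.\ \eqref{th3:eq2}), and $\bm{\xi}_{\tt sps}^{\mathcal{G}_3}(\widehat{\u},\widehat{\p})$ equals the minimum of $\|[\D\mathcal{Y}^{\T},\D\mathcal{Z}^{\T}]^{\T}\|_2$ over the solutions of this system.

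Next I would substitute $\mathfrak{I}(\widehat{\u})=\mathfrak{I}(\widehat{\p})=0$ into the blocks $\widehat{\mathbf{X}}_1$ and $\widehat{\mathbf{M}}_2$ that make up $\mathbf{M}_1$ and $\mathbf{M}_2$. Every Kronecker factor carrying $\mathfrak{I}(\widehat{\u})^{\T}$ or $\mathfrak{I}(\widehat{\p})^{\T}$ then drops out, and the surviving entries split the unknowns into two decoupled families: the real/symmetric ones --- $\vec_{S}(\mathfrak{R}(\D E\odot\Theta_E))$, $\vec(\mathfrak{R}(\D F\odot\Theta_F))$, $\vec(\mathfrak{R}(\D H\odot\Theta_H))$, $\vec_{S}(\mathfrak{R}(\D G\odot\Theta_G))$, $\mathfrak{R}(\D q)$, $\mathfrak{R}(\D r)$ --- occur only in the rows indexed by $\mathfrak{R}(Q)$ and $\mathfrak{R}(R)$, while the imaginary/skew ones --- $\vec_{SK}(\mathfrak{I}(\D E\odot\Theta_E))$, $\vec(\mathfrak{I}(\D F\odot\Theta_F))$, $\vec(\mathfrak{I}(\D H\odot\Theta_H))$, $\vec_{SK}(\mathfrak{I}(\D G\odot\Theta_G))$, $\mathfrak{I}(\D q)$, $\mathfrak{I}(\D r)$ --- occur only in the rows indexed by $\mathfrak{I}(Q)=0$ and $\mathfrak{I}(R)=0$; this uses the positions of $N_1,N_2,S_1,S_2,\Sigma_F,\Sigma_H$ inside $\widehat{\mathbf{X}}_1,\widehat{\mathbf{M}}_2$ and the structure of $\mathcal{I}_1,\mathcal{I}_2$. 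Consequently, after a permutation of rows and columns the system becomes block diagonal, consisting of a ``real'' subsystem $\bmatrix{\mathbf{M}^R_1 & \widetilde{\mathcal{I}}_1 \\ \mathbf{M}^R_2 & \widetilde{\mathcal{I}}_2}\bm w=\bmatrix{Q \\ R}$, in which $\bm w$ collects the real-part unknowns and $\mathbf{M}^R_1,\mathbf{M}^R_2,\widetilde{\mathcal{I}}_1,\widetilde{\mathcal{I}}_2$ are exactly those in the statement, together with a homogeneous ``imaginary'' subsystem whose right-hand side is $0$.

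Finally, since $\|[\D\mathcal{Y}^{\T},\D\mathcal{Z}^{\T}]^{\T}\|_2^2$ separates as the sum of the squared norm of the real block and the squared norm of the imaginary block, and the two subsystems constrain disjoint unknowns, the minimiser assigns the zero vector to the imaginary block (feasible because that right-hand side vanishes) and the minimum-norm solution of the real subsystem to the real block; in particular the optimal $\D E,\D G$ come out real symmetric and $\D F,\D H,\D q,\D r$ real, so passing from $\mathcal{G}_3$ to real perturbations loses nothing. Since $\bmatrix{\widetilde{\mathcal{I}}_1 \\ \widetilde{\mathcal{I}}_2}=-\diag(\beta_1^{-1}I_n,\beta_2^{-1}I_m)$ is invertible, $\bmatrix{\mathbf{M}^R_1 & \widetilde{\mathcal{I}}_1 \\ \mathbf{M}^R_2 & \widetilde{\mathcal{I}}_2}$ has full row rank, so Lemma~\ref{sec2:lemma} applies: the real subsystem is consistent and its minimum-norm solution is $\bmatrix{\mathbf{M}^R_1 & \widetilde{\mathcal{I}}_1 \\ \mathbf{M}^R_2 & \widetilde{\mathcal{I}}_2}^{\T}\big(\bmatrix{\mathbf{M}^R_1 & \widetilde{\mathcal{I}}_1 \\ \mathbf{M}^R_2 & \widetilde{\mathcal{I}}_2}\bmatrix{\mathbf{M}^R_1 & \widetilde{\mathcal{I}}_1 \\ \mathbf{M}^R_2 & \widetilde{\mathcal{I}}_2}^{\T}\big)^{-1}\bmatrix{Q \\ R}$, whose $2$-norm is the asserted value of $\bm{\xi}_{\tt sps}^{\mathcal{G}_3}(\widehat{\u},\widehat{\p})$. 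The step requiring genuine care --- and hence the main obstacle --- is the decoupling claim in the middle paragraph: one must verify block by block that, once $\mathfrak{I}(\widehat{\u})=\mathfrak{I}(\widehat{\p})=0$, no real-part unknown survives in an imaginary equation and no imaginary-part unknown survives in a real equation, while also tracking the weighting constants $\alpha_i,\beta_j$ correctly through the reduction; everything after that is the least-norm computation already carried out in Theorem~\ref{th1:case3}.
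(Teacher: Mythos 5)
Your proposal is correct and follows essentially the same route as the paper: the paper's proof simply observes that real data force $\mathfrak{I}(\widehat{\u})=\mathfrak{I}(\widehat{\p})=0$ and $\mathfrak{I}(Q)=\mathfrak{I}(R)=0$ and then substitutes these into Theorem~\ref{th1:case3}. Your middle paragraph spelling out the real/imaginary decoupling and the vanishing of the homogeneous imaginary subsystem is a more detailed justification of the substitution step that the paper leaves implicit.
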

\proof Since \( E \in \mathbb{SR}^{n \times n} \), \( F,\, H \in \mathbb{R}^{m \times n} \), \( G \in \mathbb{SR}^{m \times m} \), \( q \in \mathbb{R}^n \), and \( r \in \mathbb{R}^m \), the computed solution \( \widehat{\bm{x}} = [\widehat{\bm{u}}^{\T}, \widehat{\bm{p}}^{\T}]^{\T} \) is real. Consequently, \( \I(\widehat{\bm{u}}) = 0 \) and \( \I(\widehat{\bm{p}}) = 0 \). Moreover, we have \( \I(Q) = 0 \) and \( \I(R) = 0 \). Substituting these values into Theorem \ref{th1:case3}, we obtain the desired structured BE, and hence the proof is completed.
$\blacksquare$

Our analysis preserves the sparsity pattern of block matrices, in contrast to the findings in \cite{BE2020BING}, which do not retain the sparsity of the block matrices.}
\section{Numerical examples}\label{sec:numerical}
In this section, we present a few numerical experiments to support our theoretical results.  We compare the structured BEs, both with and without retaining the sparsity structure, to the unstructured BE given in \eqref{UBE:exp}. Additionally, we construct the optimal perturbation matrices for achieving the structured BE. We report results illustrating the backward stability and strong backward stability of the numerical methods  applied to the GSPP.  {For all the examples, we take  $\alpha_1=\frac{1}{\|E\|_{\mathsf{F}}},$ $\alpha_2=\frac{1}{\|F\|_{\mathsf{F}}},$ $\alpha_3=\frac{1}{\|G\|_{\mathsf{F}}}$ (or $\alpha_3=\frac{1}{\|H\|_{\mathsf{F}}}$ and $\alpha_4=\frac{1}{\|G\|_{\mathsf{F}}}$), $\beta_1=\frac{1}{\|q\|_{2}},$ and $\beta_2=\frac{1}{\|r\|_{2}}.$}

All numerical experiments were performed using MATLAB R2024a on a system with an Intel(R) Core(TM) i7-10700 CPU running at 2.90 GHz, 16 GB of RAM.
\begin{exam}
    We consider the GSPP \eqref{eq11} with the following block matrices:
 {\small   \begin{align*}
        &E=\bmatrix{-0.7073&	-0.2258i&	-0.3326+0.4370i&	-0.3111-0.1089i&	-0.2558i\\
0.2258i&	1.6606&	1.0022&	-0.0749&	0.2357i\\
-0.3326-0.4370i	&1.0022	&0&	-1.5009&	-0.1383-0.0928i\\
-0.3111+0.1089i&	-0.0749&-1.5009	&0	&-0.1i\\
0.2558i&	-0.2357i&	-0.1383+0.0928i&	0.1i	&0},
 \end{align*}}
  \begin{align*}
& F=\bmatrix{-0.0753+1.3412i	&0	&0	&-1.3057&	0\\
-0.1974&	0&	2.9371	&0.3806i	&0 \\
0.2232+1.4354i&	0.7996i&	0.3985&	0&	1.6102\\
0.3862&	0.0097&	0&	1.6286i&	0.1291i}\\
&G=\bmatrix{1.5246-0.1337i&	0	&-0.6924&	-0.0408i\\
0	&-0.9025&	0&	0.0704\\
0&	-0.6885	+0.6028i&0.7823i&	1.2309\\
0.2146i	&0&0&	-0.2746},\\
&~~ q=\bmatrix{-0.8098	-0.3969i\\
-1.3853+	0.5947i\\
0.0909+	0.2202i\\
-0.2140	-0.7165i\\
0.1509+	0.0117i},~~\text{and}~~ r=\bmatrix{-2.3554	-0.9550i\\
0.6201	-0.7783i\\
0.3106+	1.5288i\\
-0.0908	-1.8683i}.
    \end{align*}
 Let \(\widehat{{\bm x}} = [\widehat{\u}^{\T}, \, \widehat{\p}^{\T}]^{\T}\) be a computed solution to the GSPP, where $$\widehat{\u} = \bmatrix{0.9249 + 1.6011i\\
-0.5210 + 0.2407i\\
0.0189 + 0.2151i\\
-1.5819 + 0.1480i\\
0.5443 + 1.2113i
}~~ \text{and} ~~\widehat{\p} = \bmatrix{-1.2670 - 1.2768i\\
-0.7997 + 0.4628i\\
0.4206 - 0.0082i\\
1.1641 - 1.0531i}$$
with residual \(\|\mathfrak{B}{\widehat{{\bm x}}} - \bm{f}\|_2 = 0.0012\). 

The unstructured BE ${\bm \xi}({\widehat{{\bm x}}})$ computed using the formula \eqref{UBE:exp} is $3.9295\times 10^{-05}.$
Given that \( E \) is Hermitian, we calculate the structured BE while preserving the sparsity using Theorem \ref{th1:case1} and without preserving the sparsity using Corollary \ref{coro1}, as listed below:
\begin{eqnarray}
   \bm{\xi}^{\mathcal{G}_1}_{\tt sps}(\widehat{\bm{u}},\widehat{\bm{p}})= 3.7327\times 10^{-04} ~~ \text{and} ~~ \bm{\xi}^{\mathcal{G}_1}(\widehat{\bm{u}},\widehat{\bm{p}})=3.2520\times 10^{-04}.
\end{eqnarray}
We observe that the structured BEs in both cases are only one order larger than the unstructured BE. Moreover, the structure-preserving optimal  {perturbation} matrices are given by 
{\footnotesize\begin{align*}
   & \D E^{\tt sps}_{\tt opt}\\
   &=10^{-5}\cdot \bmatrix{3.5894&	-3.0713-2.1972i&	-4.2943-2.2135i	&5.4466-3.3205i&	-2.1295+1.9597i\\
-3.0713+2.1972i	&1.2093	& 5.6451-1.0180i &	2.7731+3.7360i	&-2.5866+2.5464i\\
-4.2943+2.2135i&	0.5645+1.0180i&	0&	4.0067+2.8368i&	-3.5135+1.8975i\\
5.4466+3.3205i	&2.7731-3.7360i&	4.00675-2.8368i &	0&	4.0822-2.4857i\\
-2.1295-1.9597i	&-2.5866-2.5464i&	-3.5135	-1.8975i&4.0822+2.4857i&	0}, \\
&\D F^{\tt sps}_{\tt opt}\\
&=10^{-5}\cdot\bmatrix{-0.5850+ 5.2796i&	0&	0&	-8.0796 + 2.8769i&	0\\		
-1.5373 + 1.8646i&	0&	1.9608 - 4.3999i&	2.5302 + 7.2612i&	0	\\		
-9.6826- 8.4119i&	-3.4175 + 4.2353i&	-3.2788 + 1.0017i&	0&	-7.6773 - 2.9473i	\\		
-7.1701 - 5.4915i&	-2.2259 + 7.7132i&	0&	-4.9771 - 3.4259i&	-0.1621 + 3.5988i}, \\
& \D G^{\tt sps}_{\tt opt}= 10^{-5}\cdot \bmatrix{-1.5374 + 2.9958i&	-1.6890 - 0.3728i&	0.7551 - 0.2236i&	2.6735 + 1.2196i\\		
0&	0.0349 + 2.83169i&	0&	0.9484 - 4.7166i		\\
0&	-3.3959 + 4.2108i&	0.4254 - 2.4258i&	7.1382 - 5.7885i\\		
-1.2947 + 3.6675i&	0&	0&	3.2493 + 0.0981i}, \\
&\D q_{\tt opt}=10^{-5}\cdot \bmatrix{2.8974	-3.9154i\\
3.7448+	3.2211i\\
4.9361+	3.0047i\\
-1.1399	-2.9206i\\ 2.8144+	3.0114i
}~~\text{and}~~ \D r_{\tt opt}=10^{-5}\cdot\bmatrix{
-1.7842	+0.56641i\\
1.5676	+2.6335i\\
-0.8984+	5.7852i\\
-1.9543+	0.9252i}.
\end{align*}}
Observe that the computed perturbation matrices preserve the sparsity structure and $\D E_{\tt opt}^{\tt sps}$ is Hermitian.
Moreover, we have \begin{equation}
    (\mathfrak{B}+\D \mathfrak{B}^{\tt sps}_{\tt opt})\widehat{\bm{x}}=\bm{f}+\D \bm{f}_{\tt opt}, 
\end{equation}
where \begin{equation*}
    \D \mathfrak{B}^{\tt sps}_{\tt opt}=\bmatrix{\D E^{\tt sps}_{\tt opt}& {\D F^{\tt sps}_{\tt opt}}^{\T}\\ \D F^{\tt sps}_{\tt opt}& \D G^{\tt sps}_{\tt opt}}~~\text{and}~~ \D \bm{f}_{\tt opt}=\bmatrix{\D q_{\tt opt}\\\D r_{\tt opt}}.
\end{equation*}
\end{exam}

\begin{exam}
    To discuss the strong backward stability of the GMRES algorithm \cite{gmres} in solving the GSPP \eqref{eq11}, we conduct a comparative analysis between unstructured and structured BEs in this example. The input matrices of the GSPP \eqref{eq11} are constructed as follows:
    \begin{align*}
      &E=\RR(E)+i\,\I(E), ~ F={\tt sprandn}(m,n, 0.5)+i\, {\tt sprandn}(m,n, 0.5),\\
      &G={\tt sprandn}(m,m, 0.5)+i \,{\tt sprandn}(m,m, 0.5),~ q={\tt randn}(n,1)+i \,{\tt randn}(n,1),\\
      &\text{and}~~  r={\tt randn}(m,1)+i\, {\tt randn}(m,1) .
    \end{align*}
    where $\RR(E)=E_1+E_1^{*},$ $\I(E)=E_2+E_2^{*},$ $E_1={\tt sprandn}(n,n, 0.4)$~ \text{and} ~$E_2={\tt sprandn}(n,n, 0.4).$
Here, $ {\tt sprandn}(m,n, \mu )$ denotes the sparse random matrix of size $m\times n$ with $\mu mn$ nonzero entries and $\tt{randn(m,n)}$ denotes the random matrix of size $m\times n.$  The matrix dimensions are set as $n=3{\bf k}$ and $m=2{\bf k}.$ 

We use the GMRES algorithm with the zero initial guess vector and a stopping tolerance of \(10^{-8}\). Let \(\widehat{{\bm x}} = [\widehat{\u}^{\T},\, \widehat{\p}^{\T}]^{\T}\) represent the computed solution of the GSPP. In Figure \ref{fig1}, for \({\bf k} = 5:5:40\), we plot the unstructured BE \(\bm{\xi}(\widehat{{\bm x}})\) (labeled as  ``Unstructured BE''), structured BE with preserving sparsity \(\bm{\xi}_{\tt sps}^{\mathcal{G}_1}(\widehat{\u}, \widehat{\p})\) (labeled as  ``Structured BE with sparsity''), structured BE without preserving sparsity  {\(\bm{\xi}^{\mathcal{G}_1}(\widehat{\u}, \widehat{\p})\)} (labeled as  ``Structured BE'') and relative residual $\frac{\|\bm{f}-\mathfrak{B}\widehat{\bm{x}}\|_2}{\|\bm{f}\|_2}$ (labeled as  ``Residual'').

 \begin{figure}[]
					\centering
					\includegraphics[width=0.45\textwidth]{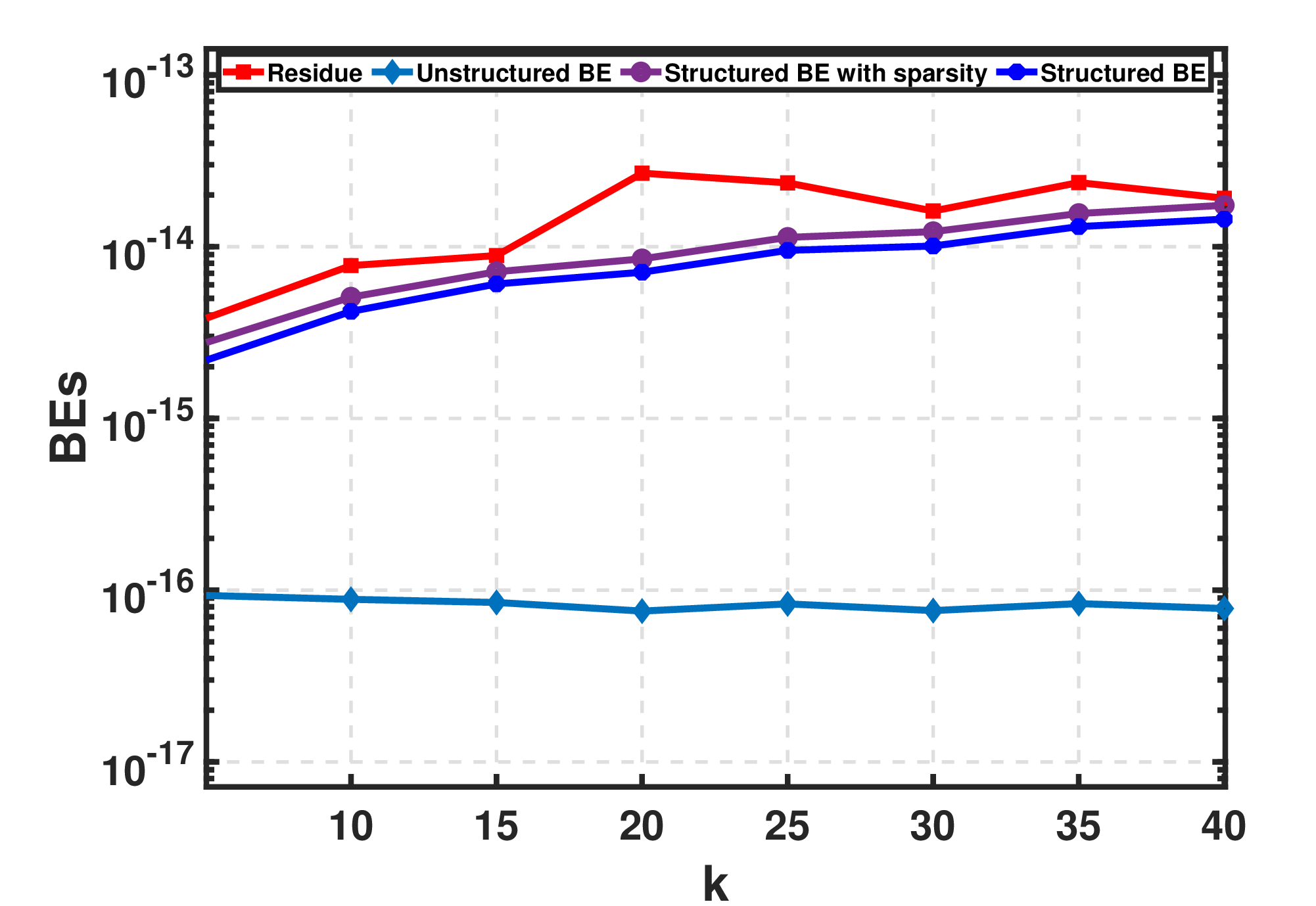}
		 \caption{ Relative residuals and BEs versus ${\bf k}$.}
		\label{fig1}
	\end{figure}
 From Figure \ref{fig1}, we observe that both structured backward errors (with and without preserving sparsity) are one to two orders of magnitude larger than the unstructured BE. Despite this difference, the structured BEs consistently remain below $\mathcal{O}(10^{-14})$, indicating that they are significantly small. Hence, our analysis shows that the GMRES algorithm exhibits both backward stability and strong backward stability for this given test problem.
\end{exam}

{\begin{exam}
We consider the GSPP \eqref{eq11} with the block matrices $E\in \mathbb{C}^{4\times 4}, \, F\in \mathbb{C}^{3\times 4}$ and $G\in  \mathbb{HC}^{3\times 3}$ and  the right hind side vectors $q\in  \mathbb{C}^{4},$ $r\in \mathbb{C}^{3}$   are given by
\begin{align*}
    &E=\bmatrix{0.01i & 10^{7}(1+i)&30(-1+i) &0\\ 100(1+i) & 0& 0& 10^5(-1+i)\\ 50(1+i) & 100(1+i)&0 &0\\ 0& 200(1-i) &10^5(1+i) & 0.01(1+i)},\\
    &F=\bmatrix{10^{-5}(1+i) & 10^7(1+i)& 0&0\\  10^8(1-i)& 10^{-5}(1+i) & -10^{-6}(1+i) &0\\ 0& 10^5(1+i)  &10^{-5}(1-i)  &10^6(1+i) },\\
    &G=\bmatrix{10^5& 0&100+0.01i\\ 0& 10^{-6} &0\\ 100-0.01i&0&-1},\, q=\bmatrix{10^4(1+i)\\ 10(1+i) \\ 0\\ 10^{-6}(1+i)},\, \text{and}\,~ r=\bmatrix{0.01(1-i)\\ 0\\ 0}.
\end{align*}
We use Gaussian elimination with partial pivoting (GEP) to solve the GSPP and the computed approximate solution is $\widehat{\bm{x}}=[\widehat{\bm{u}}^{\T},\,\widehat{\bm{p}}^{\T}]^{\T},$ where
\begin{align*}
    \widehat{\bm{u}}=\bmatrix{-0.0995-0.9904i\\
0.0005+0.0003i\\
-99.0353+9.9509i\\
0}\in \C^{4}~\text{and}~\widehat{\bm{p}}=\bmatrix{
-0.01-0.099i\\
0\\
0.9951+9.9035i}\in \C^{3}.
\end{align*}
We compute the unstructured BE $ \bm{\xi}(\widehat{\bm{x}})$ using the formula \eqref{UBE:exp} , structured BE by preserving sparsity \(\bm{\xi}_{\tt sps}^{\mathcal{G}_2}(\widehat{\u}, \widehat{\p})\) using Theorem \ref{th1:case2}, and structured BE without preserving sparsity \(\bm{\xi}^{\mathcal{G}_1}(\widehat{\u}, \widehat{\p})\) using Remark \ref{remark2}. The computed values are given by
\begin{align}
  \bm{\xi}(\widehat{\bm{x}}) =9.3151\times 10^{-20},~ \bm{\xi}_{\tt sps}^{\mathcal{G}_2}(\widehat{\u}, \widehat{\p})= 1.5494\times 10^{-9}~\text{and}~ \bm{\xi}^{\mathcal{G}_2}(\widehat{\u}, \widehat{\p})=1.4964\times 10^{-8}.
\end{align}
The unstructured backward error \( \bm{\xi}(\widehat{\bm{x}}) \) is on the order of \( \mathcal{O}(10^{-28}) \),  whereas the structured BEs $\bm{\xi}_{\tt sps}^{\mathcal{G}_2}(\widehat{\u}, \widehat{\p})$ and $\bm{\xi}^{\mathcal{G}_2}(\widehat{\u}, \widehat{\p})$ are significantly larger. Thus, the computed solution is an exact solution to a nearby unstructured linear system but not to a nearby structure-preserving GSPP.  Therefore, the GEP for solving this GSPP is backward stable but not strongly backward stable.
\end{exam}
\begin{exam}\label{exam4}
    In this example, we consider the GSPP \eqref{eq11} with the real block matrices 
		\begin{align*}
			& E= \bmatrix{I\otimes J+J\otimes I & 0\\  0&I\otimes J+ J\otimes I}\in \R^{2t^2\times2t^2}, \,\, F=\bmatrix{I\otimes X& X\otimes I}\in \R^{t^2\times 2t^2}, \\
		&\hspace{2cm}H= \bmatrix{Y\otimes X& X\otimes Y}\in \R^{t^2\times 2t^2}~\text{and}~~ G=0.
        \end{align*}
	  where $J=\frac{1}{(t+1)^2}\, \mathrm{tridiag}(-1,2,-1)\in \R^{t\times t},\quad X=\frac{1}{t+1}\,  \mathrm{tridiag}(0,1,-1)\in \R^{t\times t}$  and  $Y=\diag(1, t+1, \ldots, t^2-t+ 1)\in \R^{t\times t}.$ Here, $ \mathrm{tridiag}(a_1,a_2,a_3)\in \R^{t\times t}$ represents the tridiagonal matrix with the subdiagonal entry $a_1$, diagonal entry $a_2$,  and superdiagonal entry $a_3.$ The size of the coefficient matrix $\mathfrak{B}$ of GSPP is $ (n+m)=3t^2.$ The right-hand side vector $\bm{f}\in \R^{n+m}$ is taken such as the exact solution of the GSPP is $[1,1,\ldots,1]^{\T}\in \R^{n+m}.$

        \begin{table}[ht!]
		\centering
		\caption{Values of structured and unstructured BEs of the approximate solution obtained using GMRES for Example \ref{exam4}.}\label{tab1}
		\resizebox{9.5cm}{!}{
			\begin{tabular}{@{}ccccc@{}}
				\toprule
				$t$  & $\bm{\xi}(\widehat{\bm{x}})$ \cite{Rigal1967} & $\bm{\xi}_{\tt sps}^{\mathcal{G}_3}(\widehat{\bm{u}},\widehat{\bm{p}})^{\dagger}$ & $\bm{\eta}(\widehat{\bm{u}},\widehat{\bm{p}})$ \cite{BE2020BING}\\ [1ex]
				\midrule
$4$&$3.2543e-16$&$4.9110e-15$&$2.4997e-14$\\
$5$&$2.3891e-15$&$5.4614e-14$&$7.4194e-14$\\
$6$&$8.8212e-16$& $3.1719e-14$ &  $3.7464e-14$\\
$7$& $6.5032e-16$ & $3.1236e-14$ &  $8.6559e-14$\\
$8$& $6.0863e-16$ & $3.7896e-14$  & $4.0871e-14$ \\ [1ex]
				\bottomrule 
                \multicolumn{3}{l}{$\dagger$ indicates our obtained structured BE.}
		\end{tabular}}
	\end{table}

    We use the GMRES method to solve the GSPP. The initial guess vector is chosen $0\in \R^{n+m} $ and the stopping criterion is $\frac{\|\mathfrak{B}\bm{x}_k-\bm{f}\|_2}{\|\bm{f}\|_2}< 10^{-11},$ where $\bm{x}_k$ is the solution at $k^{th}$ iteration. We consider $t=4,5,\ldots, 8,$ and  compute the unstructured BE $\bm{\xi}(\widehat{\bm{x}})$ \cite{Rigal1967}, structured BE $\bm{\eta}(\widehat{\bm{u}},\widehat{\bm{p}})$ \cite{BE2020BING}, and structured BE by preserving sparsity using Corollary \ref{BE_R3} at the final iteration of the GMRES method.
    
 From Table \ref{tab1}, we observe that \( \bm{\eta}(\widehat{\bm{u}},\widehat{\bm{p}}) \) and \( \bm{\xi}_{\tt sps}^{\mathcal{G}_3}(\widehat{\bm{u}},\widehat{\bm{p}}) \) are on the order of \( \mathcal{O}(10^{-14}) \) for all values of \( t \), demonstrating the reliability of our structured BE formulae. Additionally, our structured BE formulation preserves the sparsity pattern of the coefficient matrix \( \mathfrak{B} \).  
On the other hand, the unstructured backward error \( \bm{\xi}(\widehat{\bm{x}}) \) remains around \( \mathcal{O}(10^{-16}) \) for all values of \( t \). This indicates that both the unstructured and structured BEs are significantly small, confirming that the GMRES method exhibits backward stable and strongly backward stable for solving this GSPP.
\end{exam}
}
\section{Conclusions}\label{sec:conclusion}
In this paper, we investigated the structured BEs for the GSPP under the constraint that the block matrices $E$ and $G$ preserve the Hermitian structure. Furthermore, we ensure that the perturbation matrices maintain the sparsity pattern of the coefficient matrix. We derive optimal perturbation matrices that simultaneously preserve the Hermitian structure and sparsity of the block matrices, identifying the closest perturbed structure-preserving GSPP. Thus, this ensures that the approximate solution corresponds to the exact solution of the perturbed GSPP. We conduct numerical experiments that validate the reliability and accuracy of our theoretical results. The derived structured BE formulas are utilized to assess the strong backward stability of the numerical methods to solve GSPPs.

\section*{Acknowledgments}
During the course of this work, Pinki Khatun was supported by a fellowship from the Council of Scientific \& Industrial Research (CSIR), New Delhi, India (File No. 09/1022(0098)/2020-EMR-I).

 \bibliography{Reference}

\begin{thebibliography}{34}
\providecommand{\natexlab}[1]{#1}
\providecommand{\url}[1]{\texttt{#1}}
\expandafter\ifx\csname urlstyle\endcsname\relax
  \providecommand{\doi}[1]{doi: #1}\else
  \providecommand{\doi}{doi: \begingroup \urlstyle{rm}\Url}\fi

\bibitem[Ahmad and Kanhya(2020)]{prince2020}
S.~S. Ahmad and P.~Kanhya.
\newblock Structured perturbation analysis of sparse matrix pencils with
  {$s$}-specified eigenpairs.
\newblock \emph{Linear Algebra Appl.}, 602:\penalty0 93--119, 2020.

\bibitem[Ahmad and Kanhya(2021)]{prince2021}
S.~S. Ahmad and P.~Kanhya.
\newblock Backward error analysis and inverse eigenvalue problems for {H}ankel
  and {S}ymmetric-{T}oeplitz structures.
\newblock \emph{Appl. Math. Comput.}, 406:\penalty0 126288, 15, 2021.

\bibitem[Ahmad and Khatun(2024)]{PinkiGSPP}
S.~S. Ahmad and P.~Khatun.
\newblock Structured condition numbers for a linear function of the solution of
  the generalized saddle point problem.
\newblock \emph{Electron. Trans. Numer. Anal.}, 60:\penalty0 471--500, 2024.

\bibitem[Ahmad and Khatun(2025)]{LAA_Pinki}
S.~S. Ahmad and P.~Khatun.
\newblock Structured backward errors for special classes of saddle point
  problems with applications.
\newblock \emph{Linear Algebra and its Applications}, 713:\penalty0 90--112,
  2025.

\bibitem[Bai and Pan(2021)]{ZZBai2021}
Z.-Z. Bai and J.-Y. Pan.
\newblock \emph{Matrix Analysis and Computations}.
\newblock SIAM, Philadelphia, PA, 2021.

\bibitem[Benzi et~al.(2005)Benzi, Golub, and Liesen]{Benzi2005}
M.~Benzi, G.~H. Golub, and J.~Liesen.
\newblock Numerical solution of saddle point problems.
\newblock \emph{Acta Numer.}, 14:\penalty0 1--137, 2005.

\bibitem[Bunch(1987)]{strongweak}
J.~R. Bunch.
\newblock The weak and strong stability of algorithms in numerical linear
  algebra.
\newblock \emph{Linear Algebra Appl.}, 88/89:\penalty0 49--66, 1987.

\bibitem[Bunch et~al.(1989)Bunch, Demmel, and Van~Loan]{strongstab}
J.~R. Bunch, J.~W. Demmel, and C.~F. Van~Loan.
\newblock The strong stability of algorithms for solving symmetric linear
  systems.
\newblock \emph{SIAM J. Matrix Anal. Appl.}, 10\penalty0 (4):\penalty0
  494--499, 1989.

\bibitem[Cao et~al.(2015)Cao, Dong, and Wang]{navier2015}
Y.~Cao, J.-L. Dong, and Y.-M. Wang.
\newblock A relaxed deteriorated {PSS} preconditioner for nonsymmetric saddle
  point problems from the steady {N}avier-{S}tokes equation.
\newblock \emph{J. Comput. Appl. Math.}, 273:\penalty0 41--60, 2015.

\bibitem[Chen et~al.(2012)Chen, Li, Chen, and Liu]{be2012LAA}
X.~S. Chen, W.~Li, X.~Chen, and J.~Liu.
\newblock Structured backward errors for generalized saddle point systems.
\newblock \emph{Linear Algebra Appl.}, 436\penalty0 (9):\penalty0 3109--3119,
  2012.

\bibitem[Ebadi et~al.(2024)Ebadi, Mehrabi, and Stanimirovi{\'c}]{Uzawa-DOS2024}
G.~Ebadi, K.~Mehrabi, and P.~S. Stanimirovi{\'c}.
\newblock An uzawa-dos method for solving saddle-point problems.
\newblock \emph{Numerical Algorithms}, pages 1--18, 2024.

\bibitem[Elman et~al.(2014)Elman, Silvester, and Wathen]{Elman2005}
H.~C. Elman, D.~J. Silvester, and A.~J. Wathen.
\newblock \emph{Finite Elements and Fast Iterative Solvers: with Applications
  in Incompressible Fluid Dynamics}.
\newblock Oxford, second edition, 2014.

\bibitem[Graham(1981)]{kronecker1981}
A.~Graham.
\newblock \emph{Kronecker Products and Matrix Calculus: with Applications}.
\newblock Wiley, New York, 1981.

\bibitem[Gulliksson et~al.(2002)Gulliksson, Jin, and Wei]{LSproblem2002}
M.~Gulliksson, X.-Q. Jin, and Y.-M. Wei.
\newblock Perturbation bounds for constrained and weighted least squares
  problems.
\newblock \emph{Linear Algebra Appl.}, 349:\penalty0 221--232, 2002.

\bibitem[Higham(2002)]{higham2002}
N.~J. Higham.
\newblock \emph{Accuracy and {S}tability of {N}umerical {A}lgorithms}.
\newblock SIAM, Philadelphia, PA, second edition, 2002.

\bibitem[Jiang and Cao(2009)]{CSPP2009}
M.-Q. Jiang and Y.~Cao.
\newblock On local {H}ermitian and skew-{H}ermitian splitting iteration methods
  for generalized saddle point problems.
\newblock \emph{J. Comput. Appl. Math.}, 231\penalty0 (2):\penalty0 973--982,
  2009.

\bibitem[Langville and Stewart(2004)]{kronecker2004}
A.~N. Langville and W.~J. Stewart.
\newblock The {K}ronecker product and stochastic automata networks.
\newblock \emph{J. Comput. Appl. Math.}, 167\penalty0 (2):\penalty0 429--447,
  2004.

\bibitem[Li and Liu(2004)]{BEKKT2004}
X.~Li and X.~Liu.
\newblock Structured backward errors for structured {KKT} systems.
\newblock \emph{J. Comput. Math.}, 22\penalty0 (4):\penalty0 605--610, 2004.

\bibitem[Lv(2023)]{threeBE2023}
P.~Lv.
\newblock Structured backward errors analysis for generalized saddle point
  problems arising from the incompressible {N}avier-{S}tokes equations.
\newblock \emph{AIMS Math.}, 8\penalty0 (12):\penalty0 30501--30510, 2023.

\bibitem[Ma(2017)]{be2017ma}
W.~Ma.
\newblock On normwise structured backward errors for the generalized saddle
  point systems.
\newblock \emph{Calcolo}, 54\penalty0 (2):\penalty0 503--514, 2017.

\bibitem[Ma et~al.(2024)Ma, Fan, and Xu]{BE2024}
W.~Ma, Y.~Fan, and X.~Xu.
\newblock Structured backward errors for block three-by-three saddle point
  systems.
\newblock \emph{Linear and Multilinear Algebra}, pages 1--21, 2024.

\bibitem[Meng et~al.(2022)Meng, He, and Miao]{be2022lma}
L.~Meng, Y.~He, and S.-X. Miao.
\newblock Structured backward errors for two kinds of generalized saddle point
  systems.
\newblock \emph{Linear Multilinear Algebra}, 70\penalty0 (7):\penalty0
  1345--1355, 2022.

\bibitem[Perugia and Simoncini(2000)]{OPTIMAL}
I.~Perugia and V.~Simoncini.
\newblock Block-diagonal and indefinite symmetric preconditioners for mixed
  finite element formulations.
\newblock \emph{Numer. Linear Algebra Appl.}, 7\penalty0 (7-8):\penalty0
  585--616, 2000.

\bibitem[Rigal and Gaches(1967)]{Rigal1967}
J.~L. Rigal and J.~Gaches.
\newblock On the compatibility of a given solution with the data of a linear
  system.
\newblock \emph{J. Assoc. Comput. Mach.}, 14:\penalty0 543--548, 1967.

\bibitem[Saad and Schultz(1986)]{gmres}
Y.~Saad and M.~H. Schultz.
\newblock G{MRES}: a generalized minimal residual algorithm for solving
  nonsymmetric linear systems.
\newblock \emph{SIAM J. Sci. Statist. Comput.}, 7\penalty0 (3):\penalty0
  856--869, 1986.

\bibitem[Shao and Meng(2022)]{CSPP2022}
X.-H. Shao and H.-N. Meng.
\newblock A generalized variant of modified relaxed positive-semidefinite and
  skew-{H}ermitian splitting preconditioner for generalized saddle point
  problems.
\newblock \emph{Comput. Appl. Math.}, 41\penalty0 (8):\penalty0 Paper No. 351,
  17, 2022.

\bibitem[Sun(1999)]{Sun1999}
J.~G. Sun.
\newblock Structured backward errors for {KKT} systems.
\newblock \emph{Linear Algebra Appl.}, 288\penalty0 (1-3):\penalty0 75--88,
  1999.

\bibitem[Watkins(2010)]{DSWATKINS}
D.~S. Watkins.
\newblock \emph{Fundamentals of matrix computations}.
\newblock Pure and Applied Mathematics (Hoboken). John Wiley \& Sons, Inc.,
  Hoboken, NJ, third edition, 2010.

\bibitem[Wilkinson(1965)]{Wilkinson1965}
J.~H. Wilkinson.
\newblock \emph{The algebraic eigenvalue problem}.
\newblock Clarendon Press, Oxford, 1965.

\bibitem[Wu and Gao(2021)]{CSPP2021}
B.~Wu and X.-B. Gao.
\newblock A modified parameterized shift-splitting preconditioner for saddle
  point problems.
\newblock \emph{Comput. Appl. Math.}, 40\penalty0 (1):\penalty0 Paper No. 1,
  18, 2021.

\bibitem[Xiang and Wei(2007)]{be2007wei}
H.~Xiang and Y.~Wei.
\newblock On normwise structured backward errors for saddle point systems.
\newblock \emph{SIAM J. Matrix Anal. Appl.}, 29\penalty0 (3):\penalty0
  838--849, 2007.

\bibitem[Yuan and Wang(2016)]{Herm2016}
S.-F. Yuan and Q.-W. Wang.
\newblock L-structured quaternion matrices and quaternion linear matrix
  equations.
\newblock \emph{Linear Multilinear Algebra}, 64\penalty0 (2):\penalty0
  321--339, 2016.

\bibitem[Zhang and Su(2012)]{PEVP2012}
K.~Zhang and Y.~Su.
\newblock Structured backward error analysis for sparse polynomial eigenvalue
  problems.
\newblock \emph{Appl. Math. Comput.}, 219\penalty0 (6):\penalty0 3073--3082,
  2012.

\bibitem[Zheng and Lv(2020)]{BE2020BING}
B.~Zheng and P.~Lv.
\newblock Structured backward error analysis for generalized saddle point
  problems.
\newblock \emph{Adv. Comput. Math.}, 46\penalty0 (2):\penalty0 34--27, 2020.

\end{thebibliography}
\bibliographystyle{abbrvnat}
\end{document}